\documentclass[11pt,reqno]{amsart}
\usepackage{amsmath,amssymb}

 \makeatletter
 \oddsidemargin.9375in
 \evensidemargin  \oddsidemargin
 \marginparwidth1.9375in
 \makeatother

\textwidth 13cm \topmargin.0in \textheight 19cm
\pagestyle{myheadings}

 \markboth
{$~$ \hfill \footnotesize {\rm M. Eshaghi Gordji and S. Abbaszadeh}
\hfill
 $~$}
 {$~$ \hfill \footnotesize {\rm On the stability of generalized mixed  ...}  \hfill$~$}

\begin{document}
\thispagestyle{empty}
 \setcounter{page}{1}

\begin{center}
{\large\bf On the stability of generalized mixed type quadratic and
quartic functional equation in quasi-Banach spaces

\vskip.20in

{\bf M. Eshaghi Gordji } \\[2mm]

{\footnotesize Department of Mathematics,
Semnan University,\\ P. O. Box 35195-363, Semnan, Iran\\
[-1mm] e-mail: {\tt madjid.eshaghi@gmail.com}}

{\bf S. Abbaszadeh  } \\[2mm]

{\footnotesize Department of Mathematics,
Semnan University,\\ P. O. Box 35195-363, Semnan, Iran\\
[-1mm] e-mail: {\tt s.abbaszadeh.math@gmail.com}}}
\end{center}
\vskip 5mm

 \noindent{\footnotesize{\bf Abstract.}
In this paper, we establish the general solution of the functional
equation
$$f(nx+y)+f(nx-y)=n^2f(x+y)+n^2f(x-y)+2(f(nx)-n^2f(x))-2(n^2-1)f(y)\eqno \hspace {0 cm}$$for fixed integers $n$ with $n\neq0,\pm1$ and
investigate the generalized
  Hyers-Ulam-Rassias stability of this equation in quasi-Banach
  spaces.
 \vskip.10in
 \footnotetext { 2000 Mathematics Subject Classification: 39B82,
 39B52.}
 \footnotetext { Keywords: Hyers-Ulam-Rassias stability, Quartic function, Quadratic function.}

  \newtheorem{df}{Definition}[section]
  \newtheorem{rk}[df]{Remark}
   \newtheorem{lem}[df]{Lemma}
   \newtheorem{thm}[df]{Theorem}
   \newtheorem{pro}[df]{Proposition}
   \newtheorem{cor}[df]{Corollary}
   \newtheorem{ex}[df]{Example}

 \setcounter{section}{0}
 \numberwithin{equation}{section}

\vskip .2in

\begin{center}
\section{Introduction}
\end{center}

The stability problem of functional equations originated from a
question of Ulam [21] in 1940, concerning the stability of group
homomorphisms. Let $(G_1,.)$ be a group and let $(G_2,*)$ be a
metric group with the metric $d(.,.).$ Given $\epsilon >0$, dose
there exist a $\delta
>0$, such that if a mapping $h:G_1\longrightarrow G_2$ satisfies the
inequality $d(h(x.y),h(x)*h(y)) <\delta$ for all $x,y\in G_1$, then
there exists a homomorphism $H:G_1\longrightarrow G_2$ with
$d(h(x),H(x))<\epsilon$ for all $x\in G_1?$ In the other words,
under what condition dose there exist a homomorphism near an
approximate homomorphism? The concept of stability for functional
equation arises when we replace the functional equation by an
inequality which acts as a perturbation of the equation.
 In 1941, D.H. Hyers [10] gave a first affirmative  answer to the question of
Ulam for Banach spaces. Let $f:{E}\longrightarrow{E'}$ be a mapping
between Banach spaces such that
$$\|f(x+y)-f(x)-f(y)\|\leq \delta $$
for all $x,y\in E,$ and for some $\delta>0.$ Then there exists a
unique additive mapping $T:{E}\longrightarrow{E'}$ such that
$$\|f(x)-T(x)\|\leq \delta$$
for all $x\in E.$ Moreover, if $f(tx)$ is continuous in t for each
fixed $x\in E,$ then $T$ is linear. In 1978, Th. M. Rassias [17]
provided a generalization of Hyers' Theorem which allows the Cauchy
difference to be unbounded. The functional equation
$$f(x+y)+f(x-y)=2f(x)+2f(y),\eqno \hspace {0.5 cm}(1.1)$$
is related to symmetric bi-additive function[1,2,11,13]. It is
natural that this equation is called a quadratic functional
equation. In particular, every solution of the quadratic equation
(1.1) is said to be a quadratic function. It is well known that a
function $f$ between real vector spaces is quadratic if and only if
there exits a unique symmetric bi-additive function $B$ such that
$f(x)=B(x,x)$ for all $x$ (see [1,13]). The bi-additive function $B$
is given by
$$B(x,y)=\frac{1}{4}(f(x+y)-f(x-y)).\eqno \hspace {0.5 cm}(1.2)$$
A Hyers-Ulam-Rassias stability problem for the quadratic functional
equation (1.1) was proved by Skof for functions $f:A\longrightarrow
B$, where A is normed space and B Banach space (see [19]). Cholewa
[4] noticed that the Theorem of Skof is still true if relevant
domain $A$ is replaced an abelian group. In the paper [6] , Czerwik
proved the Hyers-Ulam-Rassias stability of the equation (1.1).
Grabiec [9] has generalized these result mentioned above.

In [14], Won-Gil Prak and Jea Hyeong Bae, considered the following
quartic functional equation:
$$f(x+2y)+f(x-2y)=4(f(x+y)+f(x-y)+6f(y))-6f(x).\eqno\hspace {2.9cm}(1.3)$$
In fact, they proved that a function
 $f$ between two real vector spaces $X$ and $Y$ is a solution of (1.3) if and only if there
 exists a unique symmetric multi-additive function $D:X\times X\times X\times X\longrightarrow Y$ such that
 $f(x)=D(x,x,x,x)$ for all $x$. It is easy to show that
 the function $f(x)=x^4$ satisfies the functional equation (1.4), which is called
a quartic functional equation (see also [5]).

In addition H. Kim [12], has obtained the generalized
Hyers-Ulam-Rassias stability for the following mixed type of quartic
and quadratic functional equation:
$$\biguplus^{n-1}_{x_{2},...,x_{n}}f(x_{1})+2^{n-1}(n-2)\sum^{n}_{i=1}f(x_{i})=2^{n-2}\sum_{1\leq i<j\leq n}[\biguplus_{x_{j}}f(x_{i})]\eqno\hspace {2.9cm}(1.4)$$
for all n-variables $x_{1},x_{2},...,x_{n}\in E_{1}$, where $n>2$
and $f:{E_{1}}\longrightarrow{E_{2}}$ be a function between two
real linear spaces $E_{1}$ and $E_{2}$.

 Also A. Najati
and G. Zamani Eskandani [16], have established the general solution
and the generalized Hyers-Ulam-Rassias stability for a mixed type of
cubic and additive functional equation, whenever $f$ is a mapping
between two quasi-Banach spaces.

 Now, we introduce the following
functional equation for fixed integers $n$ with $n\neq0,\pm1$:
\begin{align*}
f(nx+y)+f(nx-y)&=n^2f(x+y)+n^2f(x-y)+2f(nx)\\&-2n^2f(x)-2(n^2-1)f(y)
\hspace{4.9cm} (1.5)\end{align*} in quasi Banach spaces. It is
easy to see that the function $f(x)=ax^4+bx^2$ is a solution of
the functional equation (1.5). In the present paper we investigate
the general solution of functional equation (1.5) when $f$ is a
function between vector spaces, and we establish the generalized
Hyers-Ulam-Rassias stability of this functional equation whenever
$f$ is a function between two quasi-Banach spaces.

 We recall some basic facts concerning quasi-Banach
space and some preliminary results.
\begin{df}\label{t2} (See [3, 18].) Let $X$ be a real linear space.
A quasi-norm is a real-valued function on $X$ satisfying the
following:\\ (1) $\|x\| \geq 0$ for all $x\in X$ and $\|x\|=0$ if
and only if $x=0~.$\\
(2) $\|\lambda.x\|=|\lambda|.\|x\|$ for all $\lambda \in \Bbb R$
and all $x \in X~.$\\
(3) There is a constant $K \geq 1$ such that $\|x+y\| \leq
K(\|x\|+\|y\|)$ for all $x,y \in X~.$

It follows from condition (3) that
$$\|\sum^{2m}_{i=1} x_{i}\| \leq M^m \sum^{2m}_{i=1}\|x_{i}\|,\hspace {1.5 cm}
\|\sum^{2m+1}_{i=1} x_{i}\| \leq M^{m+1}
\sum^{2m+1}_{i=1}\|x_{i}\|$$for all $~m\geq 1$ and all
$~~x_{1},x_{2},....,x_{2m+1}\in X.$
\end{df}
The pair $(X,\|.\|)$ is called a quasi-normed space if $\|.\|$ is a
quasi-norm on $X~.$ The smallest possible $M$ is called the modulus
of concavity of $\|.\|.$ A quasi-Banach space is a complete
quasi-normed space.

 A quasi-norm $\|.\|$ is called a
p-norm $(0 < p \leq 1)$ if
$$\|x+y\|^p \leq \|x\|^p+\|y\|^p$$
for all $x,y \in X~.$ In this case, a quasi-Banach space is called a
p-Banach space.

Given a p-norm, the formula $d(x,y):=\|x-y\|^p$ gives us a
translation invariant metric on X. By the Aoki-Rolewicz Theorem [
18](see also [3]), each quasi-norm is equivalent to some p-norm.
Since it is much easier to work with p-norms, henceforth we restrict
our attention mainly to p-norms. In [20], J. Tabor has investigated
a version of Hyers-Rassias-Gajda theorem (see[7,17]) in quasi-Banach
spaces.
\\  \vskip .2in

\section{ General solution}
Throughout this section, $X$ and $Y$  will be  real vector spaces.
We here present the general solution of (1.5).
\begin{lem}\label{t2} If a function $f:X\longrightarrow Y$ satisfies
the functional equation (1.5), then f is a quadratic and quartic
function.\end{lem}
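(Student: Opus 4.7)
The plan is to show that $f$ decomposes as the sum of a quadratic function (solution of (1.1)) and a quartic function (solution of (1.3)) by isolating the two pieces via a standard rescaling trick: $g(x):=f(2x)-16f(x)$ will turn out to be quadratic and $h(x):=f(2x)-4f(x)$ will turn out to be quartic, and since $h(x)-g(x)=12f(x)$, this yields the claim.

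First I would perform the basic normalisations. Setting $x=y=0$ in (1.5) gives $2f(0)=2f(0)-2(n^2-1)f(0)$, so from $n\neq\pm 1$ we conclude $f(0)=0$. Setting $x=0$ (and using $f(0)=0$) gives $(1-n^2)(f(y)+f(-y))=-2(n^2-1)f(y)$, hence $f(-y)=f(y)$, i.e.\ $f$ is even. These two facts will be used freely throughout.

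Next I would generate a short list of auxiliary identities by evaluating (1.5) on well-chosen arguments, for instance $y=x$, $y=nx$, $y=(n\pm1)x$, and by iterating (1.5) to express $f((n+1)x)+f((n-1)x)$, $f(2nx)$, and ultimately $f(2x)$ and $f(nx)$ in terms of $f(x)$ alone modulo a single degree of freedom. The goal of this bookkeeping is to produce a closed formula of the shape $f(nx)=A_n f(2x)+B_n f(x)$ with explicit scalars $A_n,B_n$ (the values that make both the quadratic $x^2$ and the quartic $x^4$ consistent, namely $A_n=\tfrac{n^2-1}{12}\cdot\tfrac{n^2}{?}$ and so on — the exact coefficients come out of the algebra). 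Once $f(nx)$ is pinned down this way, substituting back into (1.5) turns it into an identity purely in the two ``building blocks'' $g$ and $h$.

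Finally, with these identities in hand I would verify directly:
\begin{align*}
 g(x+y)+g(x-y)&=2g(x)+2g(y),\\
 h(x+2y)+h(x-2y)&=4\bigl(h(x+y)+h(x-y)+6h(y)\bigr)-6h(x),
\end{align*}
i.e.\ that $g$ satisfies (1.1) and $h$ satisfies (1.3); these two verifications reduce, after substituting the definitions of $g$ and $h$, to repeated applications of (1.5) (with shifts $x\to x,\ y\to y$ and $x\to x,\ y\to 2y$) together with the auxiliary identities. Since $f=\tfrac{1}{12}(h-g)$, this gives the desired decomposition.

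The main obstacle will be the middle step: bookkeeping the finitely many substitutions into (1.5) so as to eliminate the unwanted intermediate values $f((n\pm1)x)$, $f(2nx)$, $f(nx\pm 2x)$ and reduce everything to $f(x)$ and $f(2x)$. This is entirely routine but delicate, since the coefficients depend on $n$; a safe strategy is to test each derived identity on $f(x)=x^2$ and $f(x)=x^4$ as a sanity check before moving on, which rules out arithmetic slips.
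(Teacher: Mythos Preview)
Your top-level strategy---define $g(x)=f(2x)-16f(x)$ and $h(x)=f(2x)-4f(x)$, show $g$ is quadratic and $h$ is quartic, and recover $f=\tfrac{1}{12}(h-g)$---is exactly the paper's. The preliminary normalisations $f(0)=0$ and $f$ even are also the same.

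The gap is in your middle step. A single-variable relation such as $f(nx)=A_nf(2x)+B_nf(x)$, even if you succeed in deriving it, cannot simplify the terms $f(nx+y)$ and $f(nx-y)$ in (1.5), since those are not values of $f$ at $n\cdot(\text{something})$; so ``substituting back into (1.5)'' does not turn the equation into an identity in $g$ and $h$ alone as you claim. Likewise, the two shifts you actually name ($y\to y$ and $y\to 2y$) never produce the values $f(2(x\pm y))$ that appear when you expand $g(x\pm y)$ or $h(x\pm y)$, so the final ``verification'' cannot close. What is actually required---and what the paper does---is a chain of genuinely two-variable substitutions into (1.5) (in particular $x\mapsto x\pm y$, then $y\mapsto x\pm y$, then $y\mapsto ny$ and $y\mapsto (n\pm1)y$) which, after comparison, eliminate $n$ entirely and yield the $n$-free intermediate identity
\[
f(2x+y)+f(2x-y)=4f(x+y)+4f(x-y)+2f(2x)-8f(x)-6f(y).
\]
Only from this two-variable equation (not from (1.5) directly) does the paper then extract quadraticity of $g$ and quarticity of $h$ via one further rescaling. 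Your plan correctly identifies the endpoints but underestimates---and misidentifies---the two-variable work needed in between.
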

\begin{proof}
By letting $x=y=0$ in (1.5),we get $f(0)=0$.Set $x=0$ in (1.5) to
get $f(y)=f(-y)$ for all $y \in X$.So the function $f$ is even.We
substitute  $x=x+y$ in (1.5) and then $x=x-y$ in (1.5) to obtain
that
\begin{align*}
f(nx+(n+1)y)+f(nx+(n-1)y)&=n^2f(x+2y)+n^2f(x)+2f(nx+ny)\\&-2n^2f(x+y)-2(n^2-1)f(y)
\hspace{2.3cm} (2.1)\end{align*} and
\begin{align*}
f(nx-(n-1)y)+f(nx-(n+1)y)&=n^2f(x)+n^2f(x-2y)+2f(nx-ny)\\&-2n^2f(x-y)-2(n^2-1)f(y)
\hspace{2.3cm} (2.2)\end{align*} for all $x,y \in X$.Interchanging x
and y in (1.5) and using evenness of $f$ to get the relation
\begin{align*}
f(x+ny)+f(x-ny)&=n^2f(x+y)+n^2f(x-y)+2f(ny)\\&-2n^2f(y)-2(n^2-1)f(x)
\hspace{4.9cm} (2.3)\end{align*} for all $x,y \in X$.Replacing $y$
by $ny$ in (1.5) and then using (2.3),we have
\begin{align*}
f(nx+ny)+f(nx-ny)&=n^4f(x+y)+n^4f(x-y)+2f(ny)\\&+2f(nx)-2n^4f(x)-2n^4f(y)
\hspace{3.9cm} (2.4)\end{align*} for all $x,y \in X$.If we add (2.1)
to (2.2) and use (2.4),we have
\begin{align*}
f&(nx+(n+1)y)+f(nx-(n+1)y)+f(nx+(n-1)y)+f(nx-(n-1)y)=\\&n^2f(x+2y)+n^2f(x-2y)+2n^2(n^2-1)f(x+y)
+2n^2(n^2-1)f(x-y)\\&+4f(ny)+4f(nx)+(-4n^4+2n^2)f(x)+(-4n^4-4n^2+4)f(y)
\hspace{2.6cm} (2.5)\end{align*}for all $x,y \in X$.Substitute
$y=x+y$ in (1.5) and then $y=x-y$ in (1.5) and using evenness of $f$
to obtain that
\begin{align*}
f((n+1)x+y)+f((n-1)x-y)&=n^2f(2x+y)+n^2f(y)+2f(nx)\\&-2n^2f(x)-2(n^2-1)f(x+y)
\hspace{2.9cm} (2.6)\end{align*} and
\begin{align*}
f((n+1)x-y)+f((n-1)x+y)&=n^2f(2x-y)+n^2f(y)+2f(nx)\\&-2n^2f(x)-2(n^2-1)f(x-y)
\hspace{2.8cm} (2.7)\end{align*} for all $x,y \in X$.Interchanging x
with y in (2.6) and (2.7) and using evenness of $f$,we get the
relations
\begin{align*}
f(x+(n+1)y)+f(x-(n-1)y)&=n^2f(x+2y)+n^2f(x)+2f(ny)\\&-2n^2f(y)-2(n^2-1)f(x+y)
\hspace{2.9cm} (2.8)\end{align*} and
\begin{align*}
f(x-(n+1)y)+f(x+(n-1)y)&=n^2f(x-2y)+n^2f(x)+2f(ny)\\&-2n^2f(y)-2(n^2-1)f(x-y)
\hspace{2.9cm} (2.9)\end{align*}for all $x,y \in X$.With the
substitution $y=(n+1)y$ in (1.5) and then $y=(n-1)y$ in (1.5), we
have
\begin{align*}
f(nx+(n+1)y)+f(nx-(n+1)y)&=n^2f(x+(n+1)y)+n^2f(x-(n+1)y)+2f(nx)\\&-2n^2f(x)-2(n^2-1)f((n+1)y)
\hspace{2.1cm} (2.10)\end{align*} and
\begin{align*}
f(nx+(n-1)y)+f(nx-(n-1)y)&=n^2f(x+(n-1)y)+n^2f(x-(n-1)y)+2f(nx)\\&-2n^2f(x)-2(n^2-1)f((n-1)y)
\hspace{2.1cm} (2.11)\end{align*}for all $x,y \in X$.Replacing $x$
by $y$ in (1.5), we obtain
$$f((n+1)y)+f((n-1)y)=n^2f(2y)-2(2n^2-1)f(y)+2f(ny)\eqno \hspace {3.5 cm} (2.12)$$for all $y \in
X$.Adding (2.10) with (2.11) and using (2.8), (2.9) and (2.12), we
lead to
\begin{align*}
f&(nx+(n+1)y)+f(nx-(n+1)y)+f(nx+(n-1)y)+f(nx-(n-1)y)=\\&n^4f(x+2y)+n^4f(x-2y)-2n^2(n^2-1)f(x+y)
-2n^2(n^2-1)f(x-y)\\&
+4f(ny)+4f(nx)-2n^2(n^2-1)f(2y)+(2n^4-4n^2)f(x)\\&+(4n^4-12n^2+4)f(y)
\hspace{8.6cm} (2.13)\end{align*}for all $x,y \in X$.By comparing
(2.5) with (2.13), we arrive at
$$f(x+2y)+f(x-2y)=4f(x+y)+4f(x-y)+2f(2y)-8f(y)-6f(x)\eqno \hspace {1.9cm} (2.14)$$for all $x,y \in
X$.Interchange $x$ with $y$ in (2.14) and use evenness of $f$ to get
the relation
$$f(2x+y)+f(2x-y)=4f(x+y)+4f(x-y)+2f(2x)-8f(x)-6f(y)\eqno \hspace {1.9cm} (2.15)$$for all $x,y \in
X$.

We would show that (2.15) is a quadratic and quartic functional
equation.To get this, we show that the functions $g:X\longrightarrow
Y$ defined by $g(x)=f(2x)-16f(x)$ for all $x \in X$ and
$h:X\longrightarrow Y$ defined by $h(x)=f(2x)-4f(x)$ for all $x \in
X$, are quadratic and quartic, respectively.

Replacing $y$ by $2y$ in (2.15) and using evenness of $f$,we have
$$f(2x+2y)+f(2x-2y)=4f(2y+x)+4f(2y-x)+2f(2x)-8f(x)-6f(2y)\eqno \hspace {0.6cm} (2.16)$$for all $x,y \in
X$. By interchanging $x$ with $y$ in (2.16) and then using
(2.15),we obtain by evenness of $f$
\begin{align*}
f(2x+2y)+f(2x-2y)&=4f(2x+y)+4f(2x-y)+2f(2y)-8f(y)-6f(2x)\\&=16f(x+y)+16f(x-y)+2f(2x)+2f(2y)\\&-32f(x)
-32f(y)\hspace{6.0cm} (2.17)\end{align*}for all $x,y \in X$.By
rearranging (2.17), we have
\begin{align*}
[&f(2x+2y)-16f(x+y)]+[f(2x-2y)-16f(x-y)]=\\&2[f(2x)-16f(x)]+2[f(2y)-16f(y)]\hspace{7.7cm}\end{align*}for
all $x,y \in X$.This means that
$$g(x+y)+g(x-y)=2g(x)=2g(y)\eqno \hspace {0cm}$$for
all $x,y \in X$.Therefore the function $g:X\longrightarrow Y$ is
quadratic.

To prove that $h:X\longrightarrow Y$ is quartic,we have to show that
$$h(2x+y)+h(2x-y)=4h(x+y)+4h(x-y)+24h(x)-6h(y)\eqno \hspace {0cm}$$for
all $x,y \in X$.Replacing $x$ and $y$ by $2x$ and $2y$ in (2.15),
respectively,we get
$$f(4x+2y)+f(4x-2y)=4f(2x+2y)+4f(2x-2y)+2f(4x)-8f(2x)-6f(2y)\eqno \hspace {1.0cm} (2.18)$$for all $x,y \in
X$.Since $g(2x)=4g(x)$ for all $x \in X$ where $g:X\longrightarrow
Y$ is a quadratic function defined above, we have
$$f(4x)=20f(2x)-64f(x)\eqno \hspace {8.0cm} (2.19)$$for all $x \in
X$.Hence, it follows from (2.15) ,(2.18) and (2.19) that
\begin{align*}
h(2x+y)+h(2x-y)&=[f(4x+2y)-4f(2x+y)]+[f(4x-2y)-4f(2x-y)]\\&=4[f(2x+2y)-4f(x+y)]+4[f(2x-2y)-4f(x-y)]\\&
+24[f(2x)-4f(x)]-6[f(2y)-4f(y)]\\&=4h(x+y)+4h(x-y)+24h(x)-6h(y)\hspace{5.7cm}\end{align*}for
all $x,y \in X$.Therefore, $h:X\longrightarrow Y$ is a quartic
function.
\end{proof}

\begin{thm}\label{t2} A function $f:X\rightarrow Y$ satisfies
(1.5) if and only if there exist a unique symmetric multi-additive
function $D:X\times X\times X\times X\longrightarrow Y$ and a unique
symmetric bi-additive function $B:X\times X\longrightarrow Y$  such
that
$$f(x)=D(x,x,x,x)+B(x,x)\hspace{6cm}$$ for all $x\in X.$
\end{thm}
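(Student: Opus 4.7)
The natural approach is to leverage the preceding Lemma, which already does the heavy lifting: it produces from a solution $f$ of (1.5) two auxiliary functions
$$g(x):=f(2x)-16f(x),\qquad h(x):=f(2x)-4f(x),$$
with $g$ quadratic and $h$ quartic. Since $h(x)-g(x)=12f(x)$ and $Y$ is a real vector space, I would simply write
$$f(x)=\frac{1}{12}\bigl(h(x)-g(x)\bigr).$$
Now I would invoke the two representation facts recalled in the Introduction: every quadratic function admits a unique representation $g(x)=B_0(x,x)$ with $B_0$ symmetric bi-additive, and every quartic function admits a unique representation $h(x)=D_0(x,x,x,x)$ with $D_0$ symmetric $4$-additive (the references $[1,13]$ and $[14]$ in the paper). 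Setting $D:=\tfrac{1}{12}D_0$ and $B:=-\tfrac{1}{12}B_0$ then gives the required decomposition $f(x)=D(x,x,x,x)+B(x,x)$.

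For the converse, I would verify that each of the two summands satisfies (1.5) separately. For the quadratic piece $B(x,x)$, the identities $B(x+y,x+y)+B(x-y,x-y)=2B(x,x)+2B(y,y)$ and $B(nx,nx)=n^{2}B(x,x)$ reduce both sides of (1.5) to $2n^{2}B(x,x)+2B(y,y)$ by direct substitution. For the quartic piece $D(x,x,x,x)$, the homogeneity $D(nx,\ldots,nx)=n^{4}D(x,\ldots,x)$ together with expansion of $D(nx\pm y,\ldots,nx\pm y)$ via $4$-linearity (only even powers of $y$ survive after adding the two terms) makes the verification of (1.5) a matter of matching coefficients of $D(x,x,x,x)$, $D(x,x,y,y)$, and $D(y,y,y,y)$ on both sides; this mirrors the fact, noted in the paper, that $f(x)=ax^{4}+bx^{2}$ is a solution.

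Uniqueness is the standard polarization argument. If $D_{1}(x,x,x,x)+B_{1}(x,x)=D_{2}(x,x,x,x)+B_{2}(x,x)$, write $\Delta(x):=(D_{1}-D_{2})(x,x,x,x)=(B_{2}-B_{1})(x,x)$; replacing $x$ by $2x$ and comparing the scalings $16$ (quartic) and $4$ (quadratic) forces $12\,(B_{2}-B_{1})(x,x)=0$, hence $B_{1}=B_{2}$ on the diagonal and then $\Delta\equiv0$, so $D_{1}=D_{2}$ on the diagonal as well. Full equality of the multilinear forms then follows by polarization from the diagonal values.

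The only step I expect to require genuine calculation is the verification that the pure quartic term $D(x,x,x,x)$ satisfies (1.5): the right-hand side of (1.5) mixes quartic and quadratic-looking contributions through the $2f(nx)-2n^{2}f(x)$ correction, so the cross terms must be tracked carefully to see that they collapse. The quadratic verification and the extraction/uniqueness of $B,D$ are formal.
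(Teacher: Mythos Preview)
Your proposal is correct and follows essentially the same approach as the paper: invoke the preceding Lemma to get the quadratic $g$ and quartic $h$, write $f=\tfrac{1}{12}(h-g)$, and appeal to the standard representation theorems for quadratic and quartic functions; for the converse, the paper likewise reduces to checking that each summand satisfies (1.5). In fact you supply more detail than the paper does on both the converse verification and the uniqueness of the decomposition (the paper simply cites [1,14] for the latter and says ``by a simple computation'' for the former).
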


\begin{proof}We first assume that the function $f:X\longrightarrow
Y$ satisfies (1.5).Let $g, h :X\rightarrow Y$ be functions defined
by
$$ g(x):=f(2x)-16f(x)\hspace{2cm} h(x):=f(2x)-4f(x)\hspace{2cm}$$
 for all $x\in X.$ Hence, by Lemma (2.1), we achieve that the
 functions $g$ and $h$ are quadratic and quartic, respectively, and
$$ f(x):=\frac{1}{12}h(x)-\frac{1}{12}g(x)\hspace{7cm}$$
for all $x\in X.$ Therefore, there exist a unique symmetric
multi-additive mapping $D:X\times X\times X\times X\longrightarrow
Y$ and a unique symmetric bi-additive mapping $B:X\times
X\longrightarrow Y$  such that $D(x,x,x,x)=\frac{1}{12}h(x)$ and
$B(x,x)=-\frac{1}{12}g(x)$ for all $x\in X$(see[1, 14]). So
$$f(x)=D(x,x,x,x)+B(x,x)\hspace{6cm}$$ for all $x\in X.$

Conversely assume that
$$f(x)=D(x,x,x,x)+B(x,x)\hspace{6cm}$$ for all $x\in X,$ where the
function $D:X\times X\times X\times X\longrightarrow Y$ is symmetric
multi-additive and $B:X\times X\longrightarrow Y$ is bi-additive
defined above.By a simple computation, one can show that the
functions $D$ and $B$ satisfy the functional equation (1.5),so the
function f satisfies (1.5).
\end{proof}
\section{Hyers-Ulam-Rassias stability of Eq.(1.5)}
From now on, let $X$ and $Y$ be a quasi-Banach space with quasi-norm
$\|.\|_{X}$ and a p-Banach space  with p-norm
$\|.\|_{Y}$,respectively.Let $M$ be the modulus of concavity of
$\|.\|_{Y}$.In this section using an idea of G$\check{a}$vruta[8] we
prove the stability of Eq.(1.5) in the spirit of Hyers, Ulam and
Rassias.For convenience we use the following abbreviation for a
given function $f:X\longrightarrow Y$:
$$\bigtriangleup f(x,y)=f(nx+y)+f(nx-y)-n^2f(x+y)-n^2f(x-y)-2f(nx)+2n^2f(x)+2(n^2-1)f(y)\eqno \hspace {0cm}$$
for all $x,y\in X$.We will use the following lemma in this section.
\begin{lem}\label{t'2}(see [15].) Let $0<p\leq1$ and let
$x_1,x_2,\ldots,x_n$ be non-negative real numbers. Then
$$(\sum^{n}_{i=1} x_i)^p\leq \sum^{n}_{i=1} {x_i}^p. $$
\end{lem}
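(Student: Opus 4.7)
The plan is to prove this classical concavity inequality by induction on $n$, with the whole content of the argument concentrated in the base case $n=2$. The inductive step is immediate: if the inequality holds for $n$ summands, then
\[
\Bigl(\sum_{i=1}^{n+1} x_i\Bigr)^p = \Bigl(\Bigl(\sum_{i=1}^{n} x_i\Bigr) + x_{n+1}\Bigr)^p \le \Bigl(\sum_{i=1}^{n} x_i\Bigr)^p + x_{n+1}^p \le \sum_{i=1}^{n+1} x_i^p,
\]
where the first inequality is the $n=2$ case applied to $a=\sum_{i\le n}x_i$ and $b=x_{n+1}$, and the second is the inductive hypothesis. So everything reduces to showing $(a+b)^p \le a^p + b^p$ for $a,b\ge 0$ and $0<p\le 1$.

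For the base case I would normalize. The inequality is trivial if $a=b=0$, so assume $a+b>0$ and set $s := a/(a+b)$, $t := b/(a+b) = 1-s$, so that $s,t\in[0,1]$ with $s+t=1$. Dividing through by $(a+b)^p$, the claim becomes $1 \le s^p + t^p$. Since $s,t\in[0,1]$ and $0<p\le 1$, one has $s^p \ge s$ and $t^p \ge t$ (because $u\mapsto u^p$ dominates $u\mapsto u$ on $[0,1]$ when $p\le 1$), hence
\[
s^p + t^p \ge s + t = 1,
\]
which is what we needed.

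If I wanted a one-variable calculus alternative instead of the normalization, I could set $\varphi(t) = 1 + t^p - (1+t)^p$ for $t\ge 0$, observe $\varphi(0)=0$, and differentiate: $\varphi'(t) = p\bigl(t^{p-1} - (1+t)^{p-1}\bigr) \ge 0$ since $p-1\le 0$ makes $u\mapsto u^{p-1}$ nonincreasing on $(0,\infty)$. So $\varphi\ge 0$ on $[0,\infty)$, giving $(1+t)^p \le 1+t^p$; scaling by $a^p$ with $t=b/a$ (assuming $a>0$; the case $a=0$ is trivial) yields the base case.

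I do not anticipate a real obstacle here: the result is standard (sometimes called the subadditivity of $u\mapsto u^p$ for $0<p\le 1$), and the only subtle ingredient is the monotonicity comparison $u^p\ge u$ on $[0,1]$, which is immediate since $p-1\le 0$ forces $u^{p-1}\ge 1$ for $u\in(0,1]$. Everything else is a clean induction.
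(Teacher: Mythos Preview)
Your proof is correct and complete; the induction reduces cleanly to the $n=2$ case, and both of your arguments for that case (normalization to $s+t=1$ and the one-variable calculus with $\varphi$) are valid. The paper itself offers no proof of this lemma---it merely states the result and cites reference~[15]---so there is nothing to compare against beyond noting that you have supplied a standard, self-contained argument where the paper defers to the literature.
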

\begin{thm}\label{t2}
Let $\varphi_q:X\times X\rightarrow [0,\infty)$ be a function such
that
$$\lim_{m\rightarrow\infty} 4^{m}
\varphi_q(\frac{x}{2^{m}},\frac{y}{2^{m}})=0 \eqno\hspace
{2cm}(3.1)$$ for all $x,y\in X$ and
$$\sum^{\infty}_{i=1} 4^{pi}
{\varphi_q}^p(\frac{x}{2^{i}},\frac{y}{2^{i}})<\infty \eqno \hspace
{2cm}(3.2)$$ for all $x\in X$ and for all $y\in
\{x,2x,3x,nx,(n+1)x,(n-1)x,(n+2)x,(n-2)x,(n-3)x\}.$ Suppose that a
function $f:X\rightarrow Y$ with $f(0)=0$ satisfies the inequality
$$\|\bigtriangleup f(x,y)\|_Y \leq\varphi_q(x,y)\eqno \hspace {4.5cm} (3.3)$$
for all $x,y\in X.$ Then the limit
$$Q(x):=\lim_{m\rightarrow\infty} 4^{m}
[f(\frac{x}{2^{m-1}})-16f(\frac{x}{2^{m}})] \eqno \hspace
{2.4cm}(3.4)$$ exists for all $x\in X$ and $Q:X\rightarrow Y$ is a
unique quadratic function satisfying
$$\|f(2x)-16f(x)-Q(x)\|_Y \leq\frac{M^{11}}{4}[\widetilde{\psi}_q(x)]^\frac{1}{p}\eqno\hspace {2cm}(3.5)$$
for all $x\in X,$ where
\begin{align*}
\widetilde{\psi}_q(x):&=\sum^{\infty}_{i=1}
4^{pi}~\textbf{\{}\frac{1}{n^{2p}(n^2-1)^p}
~[~\varphi^p_q(\frac{x}{2^i},\frac{(n+2)x}{2^i})+\varphi^p_q(\frac{x}{2^i},\frac{(n-2)x}{2^i})\\&+4^p\varphi^p_q(\frac{x}{2^i},\frac{(n+1)x}{2^i})+4^p\varphi^p_q(\frac{x}{2^i},\frac{(n-1)x}{2^i})+4^p\varphi^p_q(\frac{x}{2^i},\frac{nx}{2^i})+\varphi^p_q(\frac{2x}{2^i},\frac{2x}{2^i})\\&+4^p\varphi^p_q(\frac{2x}{2^i},\frac{x}{2^i})+n^{2p}\varphi^p_q(\frac{x}{2^i},\frac{3x}{2^i})+2^p(3n^2-1)^p\varphi^p_q(\frac{x}{2^i},\frac{2x}{2^i})\\&+(17n^2-8)^p\varphi^p_q(\frac{x}{2^i},\frac{x}{2^i})+\frac{n^{2p}}{(n^2-1)^p}(\varphi^p_q(0,\frac{x(n+1)x}{2^i})+\varphi^p_q(0,\frac{(n-3)x}{2^i})\\&+10^p\varphi^p_q(0,\frac{(n-1)x}{2^i})+4^p\varphi^p_q(0,\frac{nx}{2^i})+4^p\varphi^p_q(0,\frac{(n-2)x}{2^i}))+\frac{(n^4+1)^p}{(n^2-1)^p}\varphi^p_q(0,\frac{2x}{2^i})\\&+\frac{(2(3n^4-n^2+2))^p}{(n^2-1)^p}\varphi^p_q(0,\frac{x}{2^i})~]~
\textbf{\}}. \hspace{6.4cm}(3.6)\end{align*}
\end{thm}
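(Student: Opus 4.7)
The plan is to apply Hyers' direct method to the auxiliary map $g(x) := f(2x) - 16 f(x)$. By Lemma 2.1, if $f$ satisfied (1.5) exactly then $g$ would be quadratic; since $f$ only satisfies (1.5) up to $\varphi_q$, the first task is to quantify the failure of $g$ to be quadratic.

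To do so, I would rerun each substitution in the proof of Lemma 2.1, now bounding every instance of (1.5) by the corresponding value of $\varphi_q$. Chaining (2.1)--(2.17) and combining the resulting inequalities via the quasi-triangle inequality (accumulating powers of $M$), together with Lemma 3.1 whenever I pass to $p$-th powers, should produce
$$ \| g(x+y) + g(x-y) - 2 g(x) - 2 g(y) \|_Y \leq \Phi(x,y),$$
where $\Phi$ is a linear combination of values of $\varphi_q$ at the arguments appearing in the chain. Setting $y = x$ and using $g(0) = 0$ (which follows from $f(0)=0$) yields an estimate of the shape $\| g(2x) - 4 g(x) \|_Y \leq \psi_q(x)$ in which $\psi_q(x)^p$ equals the bracketed expression in (3.6) evaluated at $i = 0$; this encodes (3.6) as $\widetilde{\psi}_q(x) = \sum_{i \geq 1} 4^{pi}\,\psi_q^p(x/2^i)$.

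Once this approximate quadratic equation is in hand, the direct method is routine. Substituting $x \mapsto x/2^{m+1}$ in the estimate for $g(2x) - 4g(x)$ and multiplying by $4^m$ yields $\|4^m g(x/2^m) - 4^{m+1} g(x/2^{m+1})\|_Y \leq 4^m \psi_q(x/2^{m+1})$; Lemma 3.1 and the $p$-norm inequality, applied to a telescoping sum, then give
$$ \| 4^m g(x/2^m) - 4^{\ell} g(x/2^{\ell}) \|_Y^p \leq 4^{-p}\sum_{i=m+1}^{\ell} 4^{pi}\,\psi_q^p(x/2^i), $$
which is a Cauchy tail by (3.2). Completeness of the $p$-Banach space $Y$ delivers the limit (3.4); taking $m = 0$, $\ell \to \infty$, and tracking the $M$-factors accumulated in Step~1 reproduces (3.5) with constant $M^{11}/4$. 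To verify that $Q$ is quadratic, I rescale the inequality above by $(x,y) \mapsto (x/2^m, y/2^m)$ and multiply by $4^m$; by (3.1) the right-hand side tends to $0$, so $Q$ satisfies the parallelogram identity $Q(x+y)+Q(x-y) = 2Q(x)+2Q(y)$. For uniqueness, any second quadratic function $Q'$ satisfying (3.5) obeys $(Q - Q')(x) = 4^m (Q - Q')(x/2^m)$; applying (3.5) to both $Q$ and $Q'$ at $x/2^m$ and invoking (3.2) forces $Q = Q'$.

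The main obstacle is the bookkeeping in the first step: each of the identities (2.1)--(2.17) contributes a $\varphi_q$-evaluation at a specific pair of arguments, and they must be collected with the correct integer coefficients, powers of $M$, and polynomial factors in $n$ to reproduce the rather intricate formula (3.6). In particular, the pairings $(x, (n\pm j)x)$, $(0, (n\pm j)x)$, and $(2x,2x)$ that appear in (3.6) trace back, respectively, to the substitutions $y \mapsto (n\pm j)x$, the specialization $x \mapsto 0$, and the doubling step used in deriving (2.16); checking that the aggregate weight matches (3.6) is purely mechanical but tedious.
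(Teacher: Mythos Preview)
Your overall architecture matches the paper's: define $g(x)=f(2x)-16f(x)$, prove an approximate identity $\|g(2x)-4g(x)\|\le C\,\psi_q(x)$, run the Hyers iteration $4^m g(x/2^m)$ in the $p$-Banach space $Y$, and finish with the usual quadraticity and uniqueness arguments. The telescoping, Cauchy-tail, and uniqueness portions of your sketch are essentially what the paper does.

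The difference is in how the core one-variable estimate is obtained. You propose to rerun the \emph{two-variable} chain (2.1)--(2.17) of Lemma~2.1 to bound $\|g(x+y)+g(x-y)-2g(x)-2g(y)\|$, and only then set $y=x$. The paper instead works in one variable from the outset: it substitutes $y=x,\,2x,\,3x,\,nx,\,(n\pm1)x,\,(n\pm2)x$ and $x=0$, $x\mapsto 2x$ directly into (3.3), combines these to reach first $\|f(3x)-6f(2x)+15f(x)\|\le\ldots$ and $\|f(4x)-4f(3x)+4f(2x)+4f(x)\|\le\ldots$, and finally $\|f(4x)-20f(2x)+64f(x)\|\le M^{8}\psi_q(x)$, i.e.\ $\|g(2x)-4g(x)\|\le M^{8}\psi_q(x)$. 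This is \emph{not} the Lemma~2.1 chain specialized at $y=x$; the substitutions are different, and that is precisely why (3.6) has the particular argument pairs $(x,kx)$, $(2x,kx)$, $(0,kx)$ and coefficients it does. So your claim that your $\psi_q(x)^p$ will literally equal the bracket in (3.6) is unlikely to survive the bookkeeping; your route yields a valid stability bound, but with a different control function and a different power of $M$.

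A second, smaller divergence: you verify $Q$ is quadratic by passing the two-variable parallelogram estimate to the limit. The paper instead shows that $Q$ satisfies (1.5), invokes Lemma~2.1 to conclude that $x\mapsto Q(2x)-16Q(x)$ is quadratic, and combines this with the separately proved identity $Q(2x)=4Q(x)$ to deduce that $-12Q$ (hence $Q$) is quadratic. Your route is cleaner here, but it hinges on having the two-variable estimate for $g$, which the paper never derives.
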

\begin{proof}
Set $x=0$ in (3.3) and then interchange $x$ with $y$ to get
$$\|(n^2-1)f(x)-(n^2-1)f(-x)\|\leq
\varphi_q(0,x) \eqno \hspace {6cm}(3.7)$$for all $x\in X$.Replacing
$y$ by $x$, $2x$, $nx$, $(n+1)x$ and $(n-1)x$ in
(3.3),respectively,we get
$$\|f((n+1)x)+f((n-1)x)-n^2f(2x)-2f(nx)+(4n^2-2)f(x)\|\leq
\varphi_q(x,x)\eqno \hspace {0.9cm}(3.8)$$and
\begin{align*}\|f((n+2)x)+f((n-2)x)-n^2f(3x)&-n^2f(-x)-2f(nx)+2n^2f(x)\\&+2(n^2-1)f(2x)\|\leq
\varphi_q(x,2x)\hspace {2.6cm}(3.9)\end{align*}and
\begin{align*}\|f(2nx)-n^2f((n+1)x)-n^2f((1-n)x)&+2(n^2-2)f(nx)+2n^2f(x)\|\\&\leq
\varphi_q(x,nx)\hspace {4.6cm}(3.10)\end{align*}and
\begin{align*}\|f((2n+1)x)+f(-x)&-n^2f((n+2)x)-n^2f(-nx)-2f(nx)+2n^2f(x)\\&+2(n^2-1)f((n+1)x)\|\leq
\varphi_q(x,(n+1)x)\hspace {2.9cm}(3.11)\end{align*}and
\begin{align*}\|f((2n-1)x)+f(x)&-n^2f((2-n)x)-(n^2+2)f(nx)+2n^2f(x)\\&+2(n^2-1)f((n-1)x)\|\leq
\varphi_q(x,(n-1)x)\hspace {3.2cm}(3.12)\end{align*}and
\begin{align*}\|f(2(n+1)x)+f(-2x)&-n^2f((n+3)x)-n^2f(-(n+1)x)-2f(nx)+2n^2f(x)\\&+2(n^2-1)f((n+2)x)\|\leq
\varphi_q(x,(n+2)x)\hspace {2.8cm}(3.13)\end{align*}and
\begin{align*}\|f((2(n-1)x)+f(2x)&-n^2f((n-1)x)-n^2f(-(n-3)x)-2f(nx)+2n^2f(x)\\&+2(n^2-1)f((n-2)x)\|\leq
\varphi_q(x,(n-2)x)\hspace {2.9cm}(3.14)\end{align*}and
\begin{align*}\|f((n+3)x)+f((n-3)x)&-n^2f(4x)-n^2f(-2x)-2f(nx)+2n^2f(x)\\&+2(n^2-1)f(3x)\|\leq
\varphi_q(x,3x)\hspace {4.0cm}(3.15)\end{align*} for all $x\in X$.We
combine (3.7) with (3.9), (3.10), (3.11), (3.12), (3.13), (3.14) and
(3.15), respectively, to get the following inequalities:
\begin{align*}\|f((n+2)x)+f((n-2)x)&-n^2f(3x)-n^2f(x)-2f(nx)+2n^2f(x)\\&+2(n^2-1)f(2x)\|\leq
\varphi_q(x,2x)+\frac{n^2}{n^2-1}\varphi_q(0,x)\hspace
{1.5cm}(3.16)\end{align*}and
\begin{align*}\|f(2nx)-n^2f((n+1)x)&-n^2f((n-1)x)+2(n^2-2)f(nx)+2n^2f(x)\|\\&\leq
\varphi_q(x,nx)+\frac{n^2}{n^2-1}\varphi_q(0,(n-1)x)\hspace
{3.4cm}(3.17)\end{align*}and
\begin{align*}\|&f((2n+1)x)+f(x)-n^2f((n+2)x)-n^2f(nx)-2f(nx)+2n^2f(x)\\&+2(n^2-1)f((n+1)x)\|\leq
\varphi_q(x,(n+1)x)\\&+\frac{n^2}{n^2-1}\varphi_q(0,nx)+\frac{1}{n^2-1}\varphi_q(0,x)\hspace
{6.9cm}(3.18)\end{align*}and
\begin{align*}\|&f((2n-1)x)+f(x)-n^2f((n-2)x)-(n^2+2)f(nx)+2n^2f(x)\\&+2(n^2-1)f((n-1)x)\|\leq
\varphi_q(x,(n-1)x)+\frac{n^2}{n^2-1}\varphi_q(0,(n-2)x)\hspace
{2.5cm}(3.19)\end{align*}and
\begin{align*}\|f(2(n+1)x)+f(2x)&-n^2f((n+3)x)-n^2f((n+1)x)-2f(nx)+2n^2f(x)\\&+2(n^2-1)f((n+2)x)\|\leq
\varphi_q(x,(n+2)x)\\&+\frac{n^2}{n^2-1}\varphi_q(0,(n+1)x)+\varphi_q(0,2x)\hspace
{3.9cm}(3.20)\end{align*}and
\begin{align*}\|f&(2(n-1)x)+f(2x)-n^2f((n-1)x)-n^2f((n-3)x)-2f(nx)+2n^2f(x)\\&+2(n^2-1)f((n-2)x)\|\leq
\varphi_q(x,(n-2)x)+\frac{n^2}{n^2-1}\varphi_q(0,(n-3)x)\hspace
{2.3cm}(3.21)\end{align*}and
\begin{align*}\|f((n+3)x)+f((n-3)x)&-n^2f(4x)-n^2f(2x)-2f(nx)+2n^2f(x)\\&+2(n^2-1)f(3x)\|\leq
\varphi_q(x,3x)+\frac{n^2}{n^2-1}\varphi_q(0,2x)\hspace
{1.4cm}(3.22)\end{align*}for all $x\in X$.Replacing $x$ and $y$ by
$2x$ and $x$ in (3.3), respectively, we obtain
\begin{align*}\|f((2n+1)x)+f((2n-1)x)-n^2f(3x)&-2f(2nx)+2n^2f(2x)\\&+(n^2-2)f(x)\|\leq
\varphi_q(2x,x)\hspace {2.5cm}(3.23)\end{align*}for all $x\in
X$.Putting $2x$ and $2y$ instead of $x$ and $y$ in (3.3),
respectively, we have
\begin{align*}\|f(2(n+1)x)+f(2(n-1)x)-n^2f(4x)-2f(2nx)&+2(2n^2-1)f(2x)\|\\&\leq
\varphi_q(2x,2x)\hspace {3.1cm}(3.24)\end{align*}for all $x\in X$.It
follows from (3.8), (3.16), (3.17), (3.18), (3.19) and (3.23) that
\begin{align*}\|&f(3x)-6f(2x)+15f(x)\|\leq
\frac{M^5}{n^{2}(n^2-1)}[\varphi_q(x,(n+1)x)+\varphi_q(x,(n-1)x)\\&+\varphi_q(2x,x)+2\varphi_q(x,nx)+n^2\varphi_q(x,2x)+(4n^2-2)\varphi_q(x,x)\\&+\frac{n^2}{n^2-1}(2\varphi_q(0,(n-1)x)+\varphi_q(0,nx)+\varphi_q(0,(n-2)x))\\&+\frac{n^4+1}{n^2-1}\varphi_q(0,x)]\hspace
{9.3cm}(3.25)\end{align*}for all $x\in X$.Also, from (3.8), (3.16),
(3.17), (3.20), (3.21), (3.22) and (3.24), we conclude
\begin{align*}\|f&(4x)-4f(3x)+4f(2x)+4f(x)\|\leq
\frac{M^6}{n^{2}(n^2-1)}[\varphi_q(x,(n+2)x)\\&+\varphi_q(x,(n-2)x)+\varphi_q(2x,2x)+2\varphi_q(x,nx)+n^2(\varphi_q(x,3x)+\varphi_q(x,x))\\&+2(n^2-1)\varphi_q(x,2x)+\frac{n^2}{n^2-1}(2\varphi_q(0,(n-1)x)+\varphi_q(0,(n-3)x)\\&+\varphi_q(0,(n+1)x))+\frac{n^4+1}{n^2-1}\varphi_q(0,2x)+2n^2\varphi_q(0,x)]\hspace
{4.3cm}(3.26)\end{align*}for all $x\in X$.Finally, combining (3.25)
and (3.26) yields
\begin{align*}\|f&(4x)-24f(2x)+64f(x)\|\leq
\frac{M^{8}}{n^{2}(n^2-1)}[\varphi_q(x,(n+2)x)+\varphi_q(x,(n-2)x)\\&+4\varphi_q(x,(n+1)x)+4\varphi_q(x,(n-1)x)+10\varphi_q(x,nx)+\varphi_q(2x,2x)\\&+4\varphi_q(2x,x)+n^2\varphi_q(x,3x)+2(3n^2-1)\varphi_q(x,2x)+(17n^2-8)\varphi_q(x,x)\\&+\frac{n^2}{n^2-1}(\varphi_q(0,(n+1)x)+\varphi_q(0,(n-3)x)+10\varphi_q(0,(n-1)x)+4\varphi_q(0,nx)\\&+4\varphi_q(0,(n-2)x))+\frac{n^4+1}{n^2-1}\varphi_q(0,2x)+\frac{2(3n^4-n^2+2)}{n^2-1}\varphi_q(0,x)]\hspace
{2.4cm}(3.27)\end{align*}for all $x\in X$.By substituting
\begin{align*}\psi_{q}(x)&=\frac{1}{n^{2}(n^2-1)}[\varphi_q(x,(n+2)x)+\varphi_q(x,(n-2)x)\\&+4\varphi_q(x,(n+1)x)+4\varphi_q(x,(n-1)x)+10\varphi_q(x,nx)+\varphi_q(2x,2x)\\&+4\varphi_q(2x,x)+n^2\varphi_q(x,3x)+2(3n^2-1)\varphi_q(x,2x)+(17n^2-8)\varphi_q(x,x)\\&+\frac{n^2}{n^2-1}(\varphi_q(0,(n+1)x)+\varphi_q(0,(n-3)x)+10\varphi_q(0,(n-1)x)+4\varphi_q(0,nx)\\&+4\varphi_q(0,(n-2)x))+\frac{n^4+1}{n^2-1}\varphi_q(0,2x)+\frac{2(3n^4-n^2+2)}{n^2-1}\varphi_q(0,x)]\hspace
{2.0cm}(3.28)\end{align*}(3.27) gives
$$\|f(4x)-20f(2x)+64f(x)\|\leq M^{8}\psi_{q}(x)\eqno \hspace {6.2cm}(3.29)$$
for all $x \in X.$

Let $g:X \to Y$ be a function defined by $g(x):=f(2x)-16f(x)$ for
all $x \in X.$From (3.29), we conclude that
$$\|g(2x)-4g(x)\|\leq M^{8}\psi_{q}(x) \eqno\hspace {7.8cm}(3.30)$$ for all $x \in X.$
If we replace $x$ in (3.30) by $\frac{x}{2^{m+1}}$ and multiply
both sides of (3.30) by $4^m,$ we get
$$\|4^{m+1}g(\frac{x}{2^{m+1}})-4^mg(\frac{x}{2^m})\|_Y\leq M^8 4^m \psi_{q}(\frac{x}{2^{m+1}})
 \eqno\hspace {3.1cm}(3.31)$$
for all $x\in X$ and all non-negative integers $m$. Since $Y$ is a
p-Banach space, then inequality (3.31) gives
\begin{align*}
\|4^{m+1}g(\frac{x}{2^{m+1}})-4^k g(\frac{x}{2^k})\|_Y^p&\leq
\sum^{m}_{i=k}\|4^{i+1}g(\frac{x}{2^{i+1}})-4^i
g(\frac{x}{2^i})\|_Y^p\\
&\leq M^{8p}\sum^{m}_{i=k} 4^{ip} {\psi_{q}}^p (\frac{x}{2^{i+1}})
\hspace {4.5cm}(3.32)
\end{align*}
for all non-negative integers $m$ and $k$ with $m\geq k$ and  for
all $x\in X.$ Since $0< p\leq1$, then by Lemma 3.1, from (3.28), we
conclude that
\begin{align*}\psi_{q}^p(x)&\leq\frac{1}{n^{2p}(n^2-1)^p}[\varphi^p_q(x,(n+2)x)+\varphi^p_q(x,(n-2)x)\\&+4^p\varphi^p_q(x,(n+1)x)+4^p\varphi^p_q(x,(n-1)x)+10^p\varphi^p_q(x,nx)+\varphi^p_q(2x,2x)\\&+4^p\varphi^p_q(2x,x)+n^{2p}\varphi^p_q(x,3x)+2^p(3n^2-1)^p\varphi^p_q(x,2x)+(17n^2-8)^p\varphi^p_q(x,x)\\&+\frac{n^{2p}}{(n^2-1)^p}(\varphi^p_q(0,(n+1)x)+\varphi^p_q(0,(n-3)x)+10^p\varphi^p_q(0,(n-1)x)+4^p\varphi^p_q(0,nx)\\&+4^p\varphi^p_q(0,(n-2)x))+\frac{(n^4+1)^p}{(n^2-1)^p}\varphi^p_q(0,2x)+\frac{(2(3n^4-n^2+2))^p}{(n^2-1)^p}\varphi^p_q(0,x)]\hspace
{1.1cm}(3.33)\end{align*}for all $x\in X.$ Therefore, it follows
from (3.2) and (3.33) that
$$\sum^{\infty}_{i=1} 4^{ip} {\psi_{q}}^p(\frac{x}{2^i})<\infty\hspace
{9.4cm}(3.34)\hspace {.2cm}$$ for all $x\in X.$ Thus, we conclude
from (3.32) and (3.34) that the sequence
$\{4^{m}g(\frac{x}{2^m})\}$ is a Cauchy sequence for all $x\in X.$
Since $Y$ is complete, then, the sequence
$\{4^{m}g(\frac{x}{2^m})\}$ converges for all $x\in X.$ So one can
define the function $Q:X\rightarrow Y$ by $$Q(x)=\lim_{m \to
\infty}4^mg(\frac{x}{2^m}) \hspace {9.0cm}(3.35)\hspace {.2cm}$$
for all $x\in X.$ Letting $k=0$ and passing the limit
$m\rightarrow\infty$ in (3.32), we get
$$\|g(x)-Q(x)\|_Y^p\leq M^{8p}\sum^{\infty}_{i=0}4^{ip}{\psi_{q}}^p
(\frac{x}{2^{i+1}})
=\frac{M^{8p}}{4^p}\sum^{\infty}_{i=1}4^{ip}{\psi_{q}}^p(\frac{x}{2^i})
\hspace {3.4cm}(3.36)\hspace {.2cm}$$ for all $x\in X.$ Therefore,
(3.5) follows from (3.2) and (3.36). Now we show that $Q$ is
quadratic. It follows from (3.1), (3.31) and (3.35) that
\begin{align*}
\|Q(2x)-4Q(x)\|_Y &=\lim_{m \to \infty}\|4^m
g(\frac{x}{2^{m-1}})-4^{m+1}g(\frac{x}{2^m})\|_Y\\
&= 4 \lim_{m \to \infty} \|4^{m-1}g(\frac{x}{4^{m-1}})-4^m
g(\frac{x}{2^m})\|_Y \\
& \leq M^{11}\lim_{m \to \infty} 4^m \psi_{q}(\frac{x}{2^m})=0
\hspace {6cm}\end{align*} for all $x \in X.$ So
$$Q(2x)=4Q(x)\hspace {10.25cm}(3.37)$$ for all $x \in X.$ On the
other hand, it follows from (3.1), (3.3), (3.4) and (3.35) that
\begin{align*}
\|\triangle Q(x,y)\|_Y&=\lim_{m \to \infty} 4^m \|\triangle
g(\frac{x}{2^{m}},\frac{y}{2^{m}})\|_Y=\lim_{m \to \infty} 4^m
\|\triangle f(\frac{x}{2^{m-1}},\frac{y}{2^{m-1}})-16
\triangle f(\frac{x}{2^m},\frac{y}{2^m})\|_Y\\
&\leq M\lim_{m \to \infty} 4^m\{\|\triangle
f(\frac{x}{2^{m-1}},\frac{y}{2^{m-1}})\|_Y+16
\|\triangle f(\frac{x}{2^m},\frac{y}{2^m})\|_Y\}\\
&\leq M\lim_{m \to \infty}
4^m\{\varphi_q(\frac{x}{2^{m-1}},\frac{y}{2^{m-1}})+16
\varphi_q(\frac{x}{2^m},\frac{y}{2^m})\}=0
\end{align*}
for all $x,y \in X.$ Hence the function $Q$ satisfies (1.5). By
Lemma 2.1, the function $x \rightsquigarrow Q(2x)-4Q(x)$ is
quadratic. Hence, (3.37) implies that the function $Q$ is quadratic.

It remains  to show that $Q$ is unique.Suppose that there exists
another quadratic function $Q^{'}:X \to Y$ witch satisfies (1.5)
and (3.5).Since $Q^{'}(\frac{x}{2^m})=\frac{1}{4^m}Q^{'}(x)$ and
$Q(\frac{x}{2^m})=\frac{1}{4^m}Q(x)$ for all $x \in X$, we
conclude from (3.5) that
$$\|Q(x)-Q^{'}(x)\|^p_Y=\lim_{m \to \infty}4^{mp}{\|g(\frac{x}{2^m})-Q^{'}(\frac{x}{2^m})\|_Y}^p
\leq \frac{M^{8p}}{4^p}\lim_{m \to
\infty}4^{mp}\widetilde{\psi}_q(\frac{x}{2^m})\eqno\hspace{0.9cm}(3.38)$$
for all $x \in X.$On the other hand, since
$$\lim_{m \to\infty}4^{mp}\sum_{i=1}^{\infty}4^{ip}{\varphi_q}^p(\frac{x}{2^{m+i}},\frac{y}{2^{m+i}})
=\lim_{m\to\infty}\sum_{i=m+1}^{\infty}4^{ip}{\varphi_q}^p
(\frac{x}{2^i},\frac{y}{2^i})=0\eqno\hspace{2cm}$$ for all $x\in X$
and for all $y\in
\{x,2x,3x,nx,(n+1)x,(n-1)x,(n+2)x,(n-2)x,(n-3)x\},$ therefore
$$\lim_{m \to \infty}4^{mp}\widetilde{\psi}_q(\frac{x}{2^m})=0\hspace{9.1cm}(3.39)$$
for all $x \in X$.By using (3.39) in (3.38), we get $Q=Q^{'}.$\\
\end{proof}

\begin{thm}\label{t2}
Let $\varphi_q:X\times X\rightarrow [0,\infty)$ be a function such
that
$$\lim_{m\rightarrow\infty}\frac{1}{4^m}
\varphi_q(2^{m}x,2^{m}y)=0 \eqno\hspace {2.2cm}$$ for all $x,y\in X$
and
$$\sum^{\infty}_{i=0} \frac{1}{4^{pi}}
{\varphi_q}^p(2^{i}x,2^{i}y)<\infty \eqno \hspace {2cm}$$ for all
$x\in X$ and for all $y\in
\{x,2x,3x,nx,(n+1)x,(n-1)x,(n+2)x,(n-2)x,(n-3)x\}.$ Suppose that a
function $f:X\rightarrow Y$ with $f(0)=0$ satisfies the inequality
$$\|\bigtriangleup f(x,y)\|_Y \leq\varphi_q(x,y)\eqno \hspace {4.5cm}$$
for all $x,y\in X.$ Then the limit
$$Q(x):=\lim_{m\rightarrow\infty} \frac{1}{4^{m}}
[f(2^{m+1}x)-16f(2^{m}x)] \eqno \hspace {2.4cm}$$ exists for all
$x\in X$ and $Q:X\rightarrow Y$ is a unique quadratic function
satisfying
$$\|f(2x)-16f(x)-Q(x)\|_Y \leq\frac{M^{8}}{4}[\widetilde{\psi}_q(x)]^\frac{1}{p}\eqno\hspace {2cm}$$
for all $x\in X,$ where
\begin{align*}
\widetilde{\psi}_q(x):&=\sum^{\infty}_{i=0}
\frac{1}{4^{pi}}~\textbf{\{}\frac{1}{n^{2p}(n^2-1)^p}
~[~\varphi^p_q(2^{i}x,2^{i}(n+2)x)+\varphi^p_q(2^{i}x,2^{i}(n-2)x)\\&+4^p\varphi^p_q(2^{i}x,2^{i}(n+1)x)+4^p\varphi^p_q(2^{i}x,2^{i}(n-1)x)+10^p\varphi^p_q(2^{i}x,2^{i}nx)+\varphi^p_q(2^{i}2x,2^{i}2x)\\&+4^p\varphi^p_q(2^{i}2x,2^{i}x)+n^{2p}\varphi^p_q(2^{i}x,2^{i}3x)+2^p(3n^2-1)^p\varphi^p_q(2^{i}x,2^{i}2x)\\&+(17n^2-8)^p\varphi^p_q(2^{i}x,2^{i}x)+\frac{n^{2p}}{(n^2-1)^p}(\varphi^p_q(0,2^{i}(n+1)x)+\varphi^p_q(0,2^{i}(n-3)x)\\&+10^p\varphi^p_q(0,2^{i}(n-1)x)+4^p\varphi^p_q(0,2^{i}nx)+4^p\varphi^p_q(0,2^{i}(n-2)x))\\&+\frac{(n^4+1)^p}{(n^2-1)^p}\varphi^p_q(0,2^{i}2x)+\frac{(2(3n^4-n^2+2))^p}{(n^2-1)^p}\varphi^p_q(0,2^{i}x)~]~
\textbf{\}}. \hspace{2.7cm}\end{align*}
\end{thm}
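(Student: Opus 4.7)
My plan is to mirror the proof of Theorem 3.2 with the iteration direction reversed, so that instead of contracting toward $0$ we expand toward $\infty$. The preliminary manipulations through inequality (3.29), namely
\[
\|f(4x)-20f(2x)+64f(x)\|_Y\le M^{8}\psi_{q}(x),
\]
depend only on substitutions in (3.3) and not on the asymptotic regime, so I would simply reuse them verbatim together with the same definition $\psi_q(x)$ as in (3.28). Defining $g(x):=f(2x)-16f(x)$ as before, I obtain
\[
\|g(2x)-4g(x)\|_Y\le M^{8}\psi_q(x),
\]
which I now rewrite (dividing by $4$) as
\[
\Bigl\|g(x)-\tfrac14 g(2x)\Bigr\|_Y\le \tfrac{M^{8}}{4}\psi_q(x).
\]

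Next I would replace $x$ by $2^{m}x$ and multiply by $1/4^{m}$ to produce the telescoping estimate
\[
\Bigl\|\tfrac{1}{4^{m}}g(2^{m}x)-\tfrac{1}{4^{m+1}}g(2^{m+1}x)\Bigr\|_Y\le\tfrac{M^{8}}{4^{m+1}}\psi_q(2^{m}x).
\]
Raising to the $p$-th power, summing from $k$ to $m$ and using the $p$-triangle inequality (available because $Y$ is a $p$-Banach space), I get
\[
\Bigl\|\tfrac{1}{4^{k}}g(2^{k}x)-\tfrac{1}{4^{m+1}}g(2^{m+1}x)\Bigr\|_Y^{p}
\le \frac{M^{8p}}{4^{p}}\sum_{i=k}^{m}\frac{1}{4^{pi}}\psi_q^{p}(2^{i}x).
\]
Applying Lemma 3.1 to the explicit form of $\psi_q$ produces the same term-by-term bound as (3.33), but with $\varphi_q(x/2^{i}, \cdot)$ replaced by $\varphi_q(2^{i}x,\cdot)$, and the hypothesis $\sum 4^{-pi}\varphi_q^{p}(2^{i}x,2^{i}y)<\infty$ makes the tail vanish. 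Hence $\{\tfrac{1}{4^{m}}g(2^{m}x)\}$ is Cauchy and, by completeness, defines $Q(x)$ as in the statement. Passing $m\to\infty$ with $k=0$ yields the bound
\[
\|f(2x)-16f(x)-Q(x)\|_Y\le\tfrac{M^{8}}{4}\bigl[\widetilde\psi_q(x)\bigr]^{1/p}.
\]

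To prove $Q$ is quadratic, I argue exactly as in Theorem 3.2: first, $\|Q(2x)-4Q(x)\|_Y=\lim \tfrac{1}{4^{m}}\|g(2^{m+1}x)-4g(2^{m}x)\|_Y\le \lim\tfrac{M^{8}}{4^{m}}\psi_q(2^{m}x)=0$, so $Q(2x)=4Q(x)$; second, $\|\triangle Q(x,y)\|_Y\le M\lim 4^{-m}\{\varphi_q(2^{m+1}x,2^{m+1}y)+16\varphi_q(2^{m}x,2^{m}y)\}=0$, so $Q$ satisfies (1.5). By Lemma 2.1 the map $x\mapsto Q(2x)-4Q(x)$ is quadratic, and combined with $Q(2x)=4Q(x)$ this forces $Q$ itself to be quadratic. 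For uniqueness, any other quadratic $Q'$ satisfying the same bound obeys $Q'(2^{m}x)=4^{m}Q'(x)$ and $Q(2^{m}x)=4^{m}Q(x)$, so
\[
\|Q(x)-Q'(x)\|_Y^{p}=\lim_{m\to\infty}\tfrac{1}{4^{mp}}\|g(2^{m}x)-Q'(2^{m}x)\|_Y^{p}\le\tfrac{M^{8p}}{4^{p}}\lim_{m\to\infty}\tfrac{1}{4^{mp}}\widetilde\psi_q(2^{m}x),
\]
and the summability hypothesis forces the right side to $0$, giving $Q=Q'$.

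The only step requiring genuine care is the bookkeeping of the constants: in particular, verifying that the constant in front of $\widetilde\psi_q$ drops from $M^{11}/4$ (in Theorem 3.2) to $M^{8}/4$ here, because the extra factor $M^{3}$ in Theorem 3.2 arose from the comparison between $g(x/2^{m-1})$ and $g(x/2^{m})$ when checking $Q(2x)=4Q(x)$ in the contracting regime; in the expanding regime this comparison is done directly via $\|g(2^{m+1}x)-4g(2^{m}x)\|_Y\le M^{8}\psi_q(2^{m}x)$, saving the extra factors of $M$. Everything else is a faithful rewriting of the argument of Theorem 3.2 with $x/2^{m}$ replaced by $2^{m}x$ throughout, and I would therefore keep the exposition brief, indicating each formula by analogy rather than re-deriving identical manipulations.
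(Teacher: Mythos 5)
Your proposal is correct and is exactly what the paper intends: its entire proof of this theorem is the sentence ``The proof is similar to the proof of Theorem 3.2,'' and you have carried out precisely that adaptation, reusing (3.29) and the definition of $\psi_q$, reversing the iteration to $2^m x$ with weights $4^{-m}$, and obtaining the constant $M^{8}/4$ consistent with the statement. No gaps.
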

\begin{proof}
The proof is similar to the proof of Theorem 3.2.
\end{proof}
\begin{cor}\label{t2}
Let $\theta, r, s$ be non-negative real numbers such that $r,s> 2$
or $s<2$. Suppose that a function $f:X\rightarrow Y$ with $f(0)=0$
satisfies the inequality
$$\|\triangle f(x,y)\|_Y \leq\left\{%
\begin{array}{ll}
    \theta, & \hbox{r =s =0;} \\
    \theta\|x\|_X^r, & \hbox{r $>$ 0, s=0;} \\
    \theta\|y\|_X^s, & \hbox{r=0, s $>$ 0;} \\
    \theta(\|x\|_X^r+\|y\|_X^s), & \hbox{r, s $>$ 0.} \\
\end{array}%
\right.\eqno\hspace{3.4cm}(3.40)$$ for all $x, y \in X.$ Then there
exists a unique quadratic function $Q:X\rightarrow Y$ satisfying
$$\| f(2x)-16f(x)-Q(x)\|_Y \leq \frac{M^{8} \theta}{n^2(n^2-1)}
\left\{%
\begin{array}{ll}
    \delta_{q}, & \hbox{r =s =0;} \\
    \alpha_{q}(x), & \hbox{r $>$ 0, s=0;} \\
    \beta_{q}(x), & \hbox{r=0, s $>$ 0;} \\
    (\alpha^p_{q}(x)+\beta^p_{q}(x))^{\frac{1}{p}} & \hbox{r, s $>$ 0.} \\
\end{array}%
\right.\eqno\hspace{2.2cm}$$ for all $x \in X,$ where
\begin{align*}\delta_{q}&=\textbf{\{}~\frac{1}{{4^p-1}(n^2-1)^p}[(6n^2-2)^{p}(n^2-1)^p+(17n^2-8)^{p}(n^2-1)^p+(6n^4-2n^2+4)^p\\&+n^{2p}(2+10^p+2*4^p)+(n^4+1)^p+n^{2p}(n^2-1)^p+3*4^{p}(n^2-1)^p+10^{p}(n^2-1)^p\\&+3(n^2-1)^p]
~\textbf{\}}^\frac{1}{p},\hspace{5cm}\end{align*}
\begin{align*}\alpha_{q}(x)=\textbf{\{}~\frac{4^{p}(2+2^{rp})+10^p+(6n^2-2)^{p}+(17n^2-8)^{p}+2^{rp}+n^{2p}}{|4^p-2^{rp}|}
~\textbf{\}}^\frac{1}{p}\|x\|_X^r \hspace{5cm}\end{align*}and
\begin{align*}\beta_{q}(x)&=\textbf{\{}~\frac{1}{(n^2-1)^{p}|4^p-2^{sp}|}[2^{sp}(6n^2-2)^{p}(n^2-1)^p+(17n^2-8)^{p}(n^2-1)^p\\&+(6n^4-2n^2+4)^p+n^{2p}((n+1)^{sp}+(n-3)^{sp}+10^{p}(n-1)^{sp}\\&+4^{p}n^{sp}+4^{p}(n-2)^{sp})+2^{sp}(n^4+1)^p+3^{sp}n^{2p}(n^2-1)^p+4^{p}(n^2-1)^p\\&+(n+2)^{sp}(n^2-1)^p+(n-2)^{sp}(n^2-1)^p+4^{p}(n+1)^{sp}(n^2-1)^p\\&+4^{p}(n-1)^{sp}(n^2-1)^p+10^{p}n^{sp}(n^2-1)^p]
~\textbf{\}}^\frac{1}{p}\|x\|_X^s .\hspace{7.3cm}\end{align*}
\end{cor}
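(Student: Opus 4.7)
The plan is to view Corollary 3.4 as a straightforward specialization of Theorems 3.2 and 3.3 to the four explicit control functions. The first step is to check which theorem applies: because $\varphi_q$ is built from powers $\|x\|_X^r$ and $\|y\|_X^s$, and the series in (3.2) is only required at $y$ of the form $cx$ for a fixed finite list of constants $c$, every series in question is geometric with ratio a power of $2^{(r-2)p}$ or $2^{(s-2)p}$. Hence the halving direction (Theorem 3.2) applies exactly when the relevant exponents exceed $2$, the doubling direction (Theorem 3.3) when they are below $2$, and the constant case $r=s=0$ falls under Theorem 3.3 trivially. The absolute values $|4^p-2^{rp}|$ and $|4^p-2^{sp}|$ appearing in $\alpha_q$ and $\beta_q$ absorb both signs in a single statement, so the hypothesis ``$r,s>2$ or $s<2$'' is exactly the combined range in which one of the two theorems fires.

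Next I would substitute each specific $\varphi_q$ into the closed form (3.6) for $\widetilde{\psi}_q(x)$ (or its Theorem 3.3 counterpart). Every term of (3.6) has the shape $\varphi_q^p(u_i,v_i)$ with $u_i\in\{0,\,x/2^i,\,2x/2^i\}$ and $v_i$ a positive multiple $cx/2^i$. For $\varphi_q\equiv\theta$, each summand collapses to a constant multiple of $\theta^p$, and the resulting single geometric series evaluates to $1/(4^p-1)$; this packages the full coefficient catalog into $\delta_q$. For $\varphi_q=\theta\|x\|_X^r$ every summand with $u_i=0$ vanishes, and the surviving $u_i=x/2^i$ and $u_i=2x/2^i$ contributions collapse, after summing a geometric series of ratio $2^{rp}/4^p$ (or $4^p/2^{rp}$ in the Theorem 3.2 case, in either case producing the denominator $|4^p-2^{rp}|$), into $\alpha_q(x)$. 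For $\varphi_q=\theta\|y\|_X^s$ every summand contributes, weighted by the $c^{sp}$ factor attached to the multiplier of $x$ in the second argument, giving $\beta_q(x)$.

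The mixed case $\varphi_q(x,y)=\theta(\|x\|_X^r+\|y\|_X^s)$ is then handled by Lemma 3.1, which gives $\varphi_q^p(x,y)\le\theta^p\|x\|_X^{rp}+\theta^p\|y\|_X^{sp}$ since $0<p\le 1$. This splits $\widetilde{\psi}_q(x)$ into the sum of the two previously computed quantities, and after taking the $p$-th root the bound becomes $\bigl(\alpha_q^p(x)+\beta_q^p(x)\bigr)^{1/p}$.

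The only genuine obstacle is bookkeeping. Expression (3.6) lists on the order of seventeen distinct terms, each with its own $n$-dependent prefactor and its own multiplier $c$ of $x$ in the second argument, so assembling $\delta_q$, $\alpha_q(x)$, and $\beta_q(x)$ demands carrying each of those constants faithfully through the geometric summation and keeping the $1/p$ roots aligned with the factor $M^8/4$ from Theorems 3.2 and 3.3. Once the hypotheses of the applicable theorem are verified, no further analytic content is required; the rest is careful substitution and tabulation.
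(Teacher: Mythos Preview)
Your approach is essentially the paper's: the paper's entire proof is the single line ``In Theorem 3.2, putting $\varphi_q(x,y):=\theta(\|x\|_X^r+\|y\|_X^s)$ for all $x,y\in X$.'' Your version is in fact more careful, since you correctly observe that the small-exponent and constant cases require Theorem 3.3 rather than Theorem 3.2 (the paper's one-line proof neglects to cite Theorem 3.3, even though the absolute values $|4^p-2^{rp}|$, $|4^p-2^{sp}|$ and the constant $\delta_q$ with denominator $4^p-1$ make clear both directions are intended).
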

\begin{proof}
In Theorem 3.2, putting  $\varphi_{q}(x,
y):=\theta(\|x\|_X^r+\|y\|_X^s)$ for all $x, y \in X.$
\end{proof}
\begin{cor}\label{t2}
Let $\theta\geq0$ and $r, s>0$ be non-negative real numbers such
that $\lambda:=r+s\neq2$. Suppose that a function $f:X\rightarrow Y$
with $f(0)=0$ satisfies the inequality
$$\|\triangle f(x,y)\|_Y \leq\theta\|x\|_X^r\|y\|_X^s ,\eqno \hspace {4cm} (3.41)$$
for all $x, y \in X.$ Then there exists a unique quadratic function
 $Q:X\rightarrow Y$ satisfying
\begin{align*}\|f(2x)-16f(x)-Q(x)\|_Y &\leq\frac{M^{8}\theta}{n^2(n^2-1)}
\textbf{\{}~\frac{1}{|4^p-2^{\lambda
p}|}[(n+2)^{sp}+(n-2)^{sp}+4^{p}(n+1)^{sp}\\&+4^{p}(n-1)^{sp}+10^{p}n^{sp}+2^{(r+s)p}+4^{p}2^{rp}+n^{2p}3^{sp}\\&+2^{sp}(6n^2-2)^{p}+(17n^2-8)^{p}]
~\textbf{\}}^\frac{1}{p}~\|x\|_X^\lambda
\hspace{7.3cm}\end{align*} for all $x \in X.$
\end{cor}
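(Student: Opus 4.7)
The plan is to derive this corollary by specialising the Hyers-Ulam-Rassias stability results of Theorems 3.2 and 3.3 to the product control function $\varphi_q(x,y):=\theta\|x\|_X^r\|y\|_X^s$. With this choice, the two series $\sum 4^{pi}\varphi_q^p(x/2^i,y/2^i)$ and $\sum 4^{-pi}\varphi_q^p(2^ix,2^iy)$ are geometric with common ratios $2^{(2-\lambda)p}$ and $2^{(\lambda-2)p}$ respectively, so the summability hypothesis of Theorem 3.2 holds exactly when $\lambda>2$ and that of Theorem 3.3 holds exactly when $\lambda<2$. The excluded value $\lambda=2$ is precisely the critical exponent at which neither theorem applies, and the two ranges in the hypothesis are thus covered disjointly by the two theorems.

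The crucial simplification is that, because $r,s>0$, one has $\varphi_q(0,z)=\theta\cdot 0^r\|z\|_X^s=0$ for every $z\in X$. Consequently every term of $\widetilde{\psi}_q(x)$ involving $\varphi_q(0,\cdot)$ vanishes identically, and only ten summands of the form $\varphi_q^p(ax,bx)$ survive, with $(a,b)$ drawn from $\{(1,n\pm2),(1,n\pm1),(1,n),(2,2),(2,1),(1,3),(1,2),(1,1)\}$. Each such surviving term equals $\theta^p a^{rp}b^{sp}\|x\|_X^{\lambda p}\cdot 2^{\mp i\lambda p}$, weighted by the fixed coefficients $1,1,4^p,4^p,10^p,1,4^p,n^{2p},2^p(3n^2-1)^p,(17n^2-8)^p$ read off from the definition of $\widetilde{\psi}_q$; summing the resulting geometric series produces a single overall factor $1/|4^p-2^{\lambda p}|$, uniformly in the two cases.

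Collecting the pieces yields
$$\widetilde{\psi}_q(x)=\frac{\theta^p\|x\|_X^{\lambda p}}{n^{2p}(n^2-1)^p\,|4^p-2^{\lambda p}|}\,C(n,p,r,s),$$
where $C(n,p,r,s)$ is the bracketed expression in the statement of the corollary. Taking the $p$-th root and substituting into the conclusion of Theorem 3.2 (respectively Theorem 3.3) delivers the claimed bound with prefactor $M^8/(n^2(n^2-1))$, while uniqueness of $Q$ is inherited directly from the uniqueness clause of the invoked theorem. The main obstacle is purely arithmetic bookkeeping: one must correctly match each of the ten surviving terms against its counterpart in the stated constant, carefully absorbing the prefactors such as $(6n^2-2)^p=2^p(3n^2-1)^p$ from the $\widetilde{\psi}_q$ coefficients and the leading $n^{2p}(n^2-1)^p$ factor pulled outside of $|4^p-2^{\lambda p}|$.
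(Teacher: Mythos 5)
Your proposal is correct and follows essentially the same route as the paper, whose entire proof is the single line ``In Theorem 3.2 putting $\varphi_{q}(x,y):=\theta\|x\|_X^r\|y\|_X^s$''; your observation that $\varphi_q(0,\cdot)\equiv 0$ kills all the $\varphi_q(0,\cdot)$ terms, and your geometric-series computation producing the factor $1/|4^p-2^{\lambda p}|$, are exactly the omitted bookkeeping. You are in fact slightly more careful than the paper, since the case $\lambda<2$ genuinely requires Theorem 3.3 (the direct-limit version) rather than Theorem 3.2, and you correctly split the two ranges accordingly.
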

\begin{proof}
In Theorem 3.2 putting $\varphi_{q}(x, y):=\theta\|x\|_X^r
\|y\|_X^s$ for all $x, y \in X.$
\end{proof}
\begin{thm}\label{t2}
Let $\varphi_t:X\times X\rightarrow [0,\infty)$ be a function such
that
$$\lim_{m\rightarrow\infty} 16^{m}
\varphi_t(\frac{x}{2^{m}},\frac{y}{2^{m}})=0 \eqno\hspace
{2cm}(3.42)$$ for all $x,y\in X$ and
$$\sum^{\infty}_{i=1} 16^{pi}
{\varphi_t}^p(\frac{x}{2^{i}},\frac{y}{2^{i}})<\infty \eqno \hspace
{2cm}(3.43)$$ for all $x\in X$ and for all $y\in
\{x,2x,3x,nx,(n+1)x,(n-1)x,(n+2)x,(n-2)x,(n-3)x\}.$ Suppose that a
function $f:X\rightarrow Y$ with $f(0)=0$ satisfies the inequality
$$\|\triangle f(x,y)\|_Y \leq\varphi_t(x,y)\eqno \hspace {4.5cm} (3.44)$$
for all $x,y\in X.$ Then the limit
$$T(x):=\lim_{m\rightarrow\infty} 16^{m}
[f(\frac{x}{2^{m-1}})-4f(\frac{x}{2^{m}})] \eqno \hspace
{2.4cm}(3.45)$$ exists for all $x\in X$ and $T:X\rightarrow Y$ is a
unique quartic function satisfying
$$\|f(2x)-4f(x)-T(x)\|_Y \leq\frac{M^{8}}{16}[\widetilde{\psi}_t(x)]^\frac{1}{p}
\eqno\hspace {2cm}(3.46)$$ for all $x\in X,$ where
\begin{align*}
\widetilde{\psi}_t(x):&=\sum^{\infty}_{i=1}
16^{pi}~\textbf{\{}\frac{1}{n^{2p}(n^2-1)^p}
~[~\varphi^p_t(\frac{x}{2^i},\frac{(n+2)x}{2^i})+\varphi^p_t(\frac{x}{2^i},\frac{(n-2)x}{2^i})\\&+4^p\varphi^p_t(\frac{x}{2^i},\frac{(n+1)x}{2^i})+4^p\varphi^p_t(\frac{x}{2^i},\frac{(n-1)x}{2^i})+10^p\varphi^p_t(\frac{x}{2^i},\frac{nx}{2^i})+\varphi^p_t(\frac{2x}{2^i},\frac{2x}{2^i})\\&+4^p\varphi^p_t(\frac{2x}{2^i},\frac{x}{2^i})+n^{2p}\varphi^p_t(\frac{x}{2^i},\frac{3x}{2^i})+2^p(3n^2-1)^p\varphi^p_t(\frac{x}{2^i},\frac{2x}{2^i})\\&+(17n^2-8)^p\varphi^p_t(\frac{x}{2^i},\frac{x}{2^i})+\frac{n^{2p}}{(n^2-1)^p}(\varphi^p_t(0,\frac{x(n+1)x}{2^i})+\varphi^p_t(0,\frac{(n-3)x}{2^i})\\&+10^p\varphi^p_t(0,\frac{(n-1)x}{2^i})+4^p\varphi^p_t(0,\frac{nx}{2^i})+4^p\varphi^p_t(0,\frac{(n-2)x}{2^i}))\\&+\frac{(n^4+1)^p}{(n^2-1)^p}\varphi^p_t(0,\frac{2x}{2^i})+\frac{(2(3n^4-n^2+2))^p}{(n^2-1)^p}\varphi^p_t(0,\frac{x}{2^i})~]~
\textbf{\}}. \hspace{3.1cm}(3.47)\end{align*}
\end{thm}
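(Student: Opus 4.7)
The plan is to mirror the proof of Theorem 3.2, swapping the roles of the two auxiliary functions from Lemma 2.1. The relevant companion here is the quartic piece $h(x):=f(2x)-4f(x)$, and the dilation factor in the Cauchy sequence is $16^m$ rather than $4^m$, because a quartic function scales by $16$ under doubling. The derivation (3.7)--(3.29) never uses any property of $\varphi_q$ beyond the bound (3.3), so rerunning it verbatim with $\varphi_t$ in place of $\varphi_q$ yields
$$\|f(4x)-20f(2x)+64f(x)\|_Y\leq M^{8}\psi_t(x),$$
where $\psi_t$ is defined by the right-hand side of (3.28) with every $\varphi_q$ replaced by $\varphi_t$. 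The key algebraic identity is
$$h(2x)-16h(x)=f(4x)-4f(2x)-16f(2x)+64f(x)=f(4x)-20f(2x)+64f(x),$$
so $\|h(2x)-16h(x)\|_Y\leq M^{8}\psi_t(x)$ for all $x\in X$.

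Substituting $x\mapsto x/2^{m+1}$ and multiplying by $16^{m}$ gives the one-step estimate
$$\|16^{m+1}h(x/2^{m+1})-16^{m}h(x/2^{m})\|_Y\leq M^{8}\,16^{m}\psi_t(x/2^{m+1}).$$
The $p$-Banach structure of $Y$ together with Lemma 3.1 (used to distribute $p$-th powers inside $\psi_t$, reproducing the analogue of (3.33)) and a telescoping argument yield
$$\|16^{m+1}h(x/2^{m+1})-16^{k}h(x/2^{k})\|_Y^{p}\leq M^{8p}\sum_{i=k}^{m}16^{ip}\psi_t^{p}(x/2^{i+1}).$$
Summability in (3.43) forces this tail to vanish as $k\to\infty$, so $\{16^{m}h(x/2^{m})\}$ is Cauchy. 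Completeness of $Y$ delivers the limit function $T$ in (3.45), and setting $k=0$ while letting $m\to\infty$ produces (3.46).

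To verify that $T$ is quartic I would follow the same two-step route as in Theorem 3.2. The one-step inequality at $x/2^m$ together with (3.42) gives
$$\|T(2x)-16T(x)\|_Y\leq M^{11}\lim_{m\to\infty}16^{m}\psi_t(x/2^{m})=0,$$
so $T(2x)=16T(x)$. Next, linearity of $\triangle$ and the identity $\triangle h(x,y)=\triangle f(2x,2y)-4\triangle f(x,y)$ combined with (3.42) and (3.44) give $\|\triangle T(x,y)\|_Y=0$, so $T$ satisfies (1.5). Lemma 2.1 then says that $x\mapsto T(2x)-4T(x)$ is quartic, but this function equals $12T(x)$, hence $T$ itself is quartic. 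For uniqueness, suppose $T'$ is another quartic solution of (1.5) satisfying (3.46); quarticity of $T$ and $T'$ gives $T(x)=16^{m}T(x/2^{m})$ and $T'(x)=16^{m}T'(x/2^{m})$, and applying the $p$-norm inequality through the bridge point $h(x/2^{m})$ together with (3.46) at $x/2^{m}$ yields
$$\|T(x)-T'(x)\|_Y^{p}\leq 16^{mp}\cdot\frac{2M^{8p}}{16^{p}}\,\widetilde{\psi}_t(x/2^{m}),$$
and the quartic analogue of (3.39), obtained by re-indexing the tail of (3.43), sends the right-hand side to $0$. The main obstacle is essentially bookkeeping: one must check that every factor $4^{m}$ appearing in the proof of Theorem 3.2 converts cleanly to $16^{m}$, and that (3.42)--(3.43) supply exactly the estimates required for the Cauchy property, quarticity, and uniqueness in the quartic setting.
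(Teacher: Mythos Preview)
Your proposal is correct and follows essentially the same route as the paper: derive $\|f(4x)-20f(2x)+64f(x)\|_Y\le M^8\psi_t(x)$ by rerunning (3.7)--(3.29), set $h(x)=f(2x)-4f(x)$, telescope $16^m h(x/2^m)$ in the $p$-norm, and then argue $T(2x)=16T(x)$, $\triangle T=0$, and uniqueness exactly as in Theorem~3.2 with $4$ replaced by $16$. Your invocation of Lemma~2.1 (that $T(2x)-4T(x)=12T(x)$ is quartic) is in fact cleaner than the paper's phrasing, which contains a typo at that step.
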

\begin{proof}
Similar to the proof Theorem 3.2, we have
$$\|f(4x)-20f(2x)+64f(x)\|\leq M^{8}\psi_{t}(x), \eqno \hspace {6cm}(3.48)$$
for all $x \in X,$ where
\begin{align*}\psi_{t}(x)&=
\frac{1}{n^{2}(n^2-1)}[\varphi_t(x,(n+2)x)+\varphi_t(x,(n-2)x)\\&+4\varphi_t(x,(n+1)x)+4\varphi_t(x,(n-1)x)+10\varphi_t(x,nx)+\varphi_t(2x,2x)\\&+4\varphi_t(2x,x)+n^2\varphi_t(x,3x)+2(3n^2-1)\varphi_t(x,2x)+(17n^2-8)\varphi_t(x,x)\\&+\frac{n^2}{n^2-1}(\varphi_t(0,(n+1)x)+\varphi_t(0,(n-3)x)+10\varphi_t(0,(n-1)x)+4\varphi_t(0,nx)\\&+4\varphi_t(0,(n-2)x))+\frac{n^4+1}{n^2-1}\varphi_t(0,2x)+\frac{2(3n^4-n^2+2)}{n^2-1}\varphi_t(0,x)].\hspace
{2cm}(3.49)\end{align*}
 Let $h:X\to Y$ be a function defined by $h(x):=f(2x)-4f(x)$. Then, we
conclude that
$$\|h(2x)-16h(x)\|\leq M^{8}\psi_{t}(x) \eqno\hspace {7.5cm}(3.50)$$ for all $x \in X.$
If we replace $x$ in (3.50) by $\frac{x}{2^{m+1}}$ and multiply both
sides of (3.50) by $16^m,$ we get
$$\|16^{m+1}h(\frac{x}{2^{m+1}})-16^mh(\frac{x}{2^m})\|_Y\leq M^{8} 16^m \psi_{t}(\frac{x}{2^{m+1}})
 \eqno\hspace {2.5cm}(3.51)$$
for all $x\in X$ and all non-negative integers $m$. Since $Y$ is a
p-Banach space, therefore, inequality (3.51)  gives
\begin{align*}
\|16^{m+1}h(\frac{x}{2^{m+1}})-16^k h(\frac{x}{2^k})\|_Y^p&\leq
\sum^{m}_{i=k}\|16^{i+1}h(\frac{x}{2^{i+1}})-16^i
h(\frac{x}{2^i})\|_Y^p\\
&\leq M^{8p} \sum^{m}_{i=k} 16^{pi} {\psi_{t}}^p
(\frac{x}{2^{i+1}}) \hspace {3.9cm}(3.52)
\end{align*}
for all non-negative integers $m$ and $k$ with $m\geq k$ and all
$x\in X.$ Since $0< p\leq1$, then by Lemma 3.1, we conclude from
(3.49) that

\begin{align*}\psi_{t}^p(x)&\leq
\frac{1}{n^{2p}(n^2-1)^p}~[\varphi^p_t(x,(n+2)x)+\varphi^p_t(x,(n-2)x)\\&+4^p\varphi^p_t(x,(n+1)x)+4^p\varphi^p_t(x,(n-1)x)+10^p\varphi^p_t(x,nx)+\varphi^p_t(2x,2x)\\&+4^p\varphi^p_t(2x,x)+n^{2p}\varphi^p_t(x,3x)+2^p(3n^2-1)^p\varphi^p_t(x,2x)+(17n^2-8)^p\varphi^p_t(x,x)\\&+\frac{n^{2p}}{(n^2-1)^p}(\varphi^p_t(0,(n+1)x)+\varphi^p_t(0,(n-3)x)+10^p\varphi^p_t(0,(n-1)x)+4^p\varphi^p_t(0,nx)\\&+4^p\varphi^p_t(0,(n-2)x))+\frac{(n^4+1)^p}{(n^2-1)^p}\varphi^p_t(0,2x)+\frac{(2(3n^4-n^2+2))^p}{(n^2-1)^p}\varphi^p_t(0,x)~],
\hspace {0.7cm}(3.53)\end{align*}
 for all $x\in X.$ Therefore, it follows from (3.42) and (3.52) that
$$\sum^{\infty}_{i=1} 16^{pi} {\psi_{t}}^p(\frac{x}{2^i})<\infty\hspace
{9.15cm}(3.54)\hspace {.2cm}$$ for all $x\in X.$ Thus, we conclude
from (3.52) and (3.54) that the sequence
$\{16^{m}h(\frac{x}{2^m})\}$ is a Cauchy sequence for all $x\in X.$
Since $Y$ is complete, the sequence $\{16^{m}h(\frac{x}{2^m})\}$
converges for all $x\in X.$ So one can define the function
$T:X\rightarrow Y$ by $$T(x)=\lim_{m \to \infty}16^mh(\frac{x}{2^m})
\hspace {8.8cm}(3.55)\hspace {.2cm}$$ for all $x\in X.$ Letting
$k=0$ and passing the limit $m\rightarrow\infty$ in (3.52), we get
$$\|h(x)-T(x)\|_Y^p\leq M^{8p}\sum^{\infty}_{i=0}16^{pi}{\psi_{t}}^p
(\frac{x}{2^{i+1}})
=\frac{M^{11p}}{16^p}\sum^{\infty}_{i=1}16^{pi}{\psi_{t}}^p(\frac{x}{2^i})
\hspace {2.8cm}(3.56)\hspace {.2cm}$$ for all $x\in X.$ Therefore
(3.45) follows from (3.43) and (3.55). Now we show that $T$ is
quartic. According to (3.42), (3.51) and (3.55), it follows that
\begin{align*}
\|T(2x)-16T(x)\|_Y &=\lim_{m \to \infty}\|16^m
h(\frac{x}{2^{m-1}})-16^{m+1}h(\frac{x}{2^m})\|_Y\\
&= 16 \lim_{m \to \infty} \|16^{m-1}h(\frac{x}{16^{m-1}})-16^m
h(\frac{x}{2^m})\|_Y \\
& \leq M^{8}\lim_{m \to \infty} 16^m \psi_{t}(\frac{x}{2^m})=0
\hspace {6cm}\end{align*} for all $x \in X.$ So
$$T(2x)=16T(x)\hspace {9.8cm}(3.57)$$ for all $x \in X.$ On the
other hand, by (3.44), (3.54) and (3.55), we lead to
\begin{align*}
\|\triangle T(x,y)\|_Y&=\lim_{m \to \infty} 16^m \|\triangle
h(\frac{x}{2^{m}},\frac{y}{2^{m}})\|_Y=\lim_{m \to \infty} 16^m
\|\triangle f(\frac{x}{2^{m-1}},\frac{y}{2^{m-1}})-4
\triangle f(\frac{x}{2^m},\frac{y}{2^m})\|_Y\\
&\leq M\lim_{m \to \infty} 16^m\{\|\triangle
f(\frac{x}{2^{m-1}},\frac{y}{2^{m-1}})\|_Y+4
\|\triangle f(\frac{x}{2^m},\frac{y}{2^m})\|_Y\}\\
&\leq M\lim_{m \to \infty}
16^m\{\varphi_t(\frac{x}{2^{m-1}},\frac{y}{2^{m-1}})+4
\varphi_t(\frac{x}{2^m},\frac{y}{2^m})\}=0
\end{align*}
for all $x,y \in X.$ Hence, the function $T$ satisfies (1.5). By
Lemma 2.1, the function $x \rightsquigarrow T(2x)-16T(x)$ is
quartic. Therefore (3.57) implies that the function $T$ is quartic.\\

To prove the uniqueness property of $T,$ let $T^{'}:X \to Y$ be
another quartic function satisfying (3.46). Since $$\lim_{m \to
\infty}16^{mp}\sum_{i=1}^{\infty}16^{pi}{\varphi_t}^p(\frac{x}{2^{m+i}},\frac{x}{2^{m+i}})
=\lim_{m\to\infty}\sum_{i=m+1}^{\infty}16^{pi}{\varphi_t}^p
(\frac{x}{2^i},\frac{x}{2^i})=0\eqno\hspace{2cm}$$ for all $x\in X$
and for all $y\in
\{x,2x,3x,nx,(n+1)x,(n-1)x,(n+2)x,(n-2)x,(n-3)x\},$ then
$$\lim_{m \to \infty}16^{mp}\widetilde{\psi}_t(\frac{x}{2^m})=0\hspace{9cm}(3.58)$$
for all $x \in X$. It follows from (3.46) and (3.58) that
$$\|T(x)-T^{'}(x)\|_Y=\lim_{m \to \infty}16^{mp}{\|h(\frac{x}{2^m})-T^{'}(\frac{x}{2^m})\|_Y}^p
\leq \frac{M^{8p}}{16^p}\lim_{m \to
\infty}16^{mp}\widetilde{\psi}_t(\frac{x}{2^m})=0$$
for all $x \in X.$ So $T=T^{'}.$\\
\end{proof}
\begin{thm}\label{t2}
Let $\varphi_t:X\times X\rightarrow [0,\infty)$ be a function such
that
$$\lim_{m\rightarrow\infty} \frac{1}{16^{m}}
\varphi_t(2^{m}x,2^{m}y)=0 \eqno\hspace {2cm}$$ for all $x,y\in X$
and
$$\sum^{\infty}_{i=0} \frac{1}{16^{pi}}
{\varphi_t}^p(2^{i}x,2^{i}y)<\infty \eqno \hspace {2cm}$$ for all
$x\in X$ and for all $y\in
\{x,2x,3x,nx,(n+1)x,(n-1)x,(n+2)x,(n-2)x,(n-3)x\}.$ Suppose that a
function $f:X\rightarrow Y$ with $f(0)=0$ satisfies the inequality
$$\|\triangle f(x,y)\|_Y \leq\varphi_t(x,y)\eqno \hspace {4.5cm}$$
for all $x,y\in X.$ Then the limit
$$T(x):=\lim_{m\rightarrow\infty} \frac{1}{16^{m}}
[f(2^{m+1}x)-4f(2^{m}x)] \eqno \hspace {2.4cm}$$ exists for all
$x\in X$ and $T:X\rightarrow Y$ is a unique quartic function
satisfying
$$\|f(2x)-4f(x)-T(x)\|_Y \leq\frac{M^{8}}{16}[\widetilde{\psi}_t(x)]^\frac{1}{p}
\eqno\hspace {2cm}$$ for all $x\in X,$ where
\begin{align*}
\widetilde{\psi}_t(x):&=\sum^{\infty}_{i=0}
\frac{1}{16^{pi}}~\textbf{\{}\frac{1}{n^{2p}(n^2-1)^p}
~[~\varphi^p_t(2^{i}x,2^{i}(n+2)x)+\varphi^p_t(2^{i}x,2^{i}(n-2)x)\\&+4^p\varphi^p_t(2^{i}x,2^{i}(n+1)x)+4^p\varphi^p_t(2^{i}x,2^{i}(n-1)x)+10^p\varphi^p_t(2^{i}x,2^{i}nx)+\varphi^p_t(2^{i}2x,2^{i}2x)\\&+4^p\varphi^p_t(2^{i}2x,2^{i}x)+n^{2p}\varphi^p_t(2^{i}x,2^{i}3x)+2^p(3n^2-1)^p\varphi^p_t(2^{i}x,2^{i}2x)\\&+(17n^2-8)^p\varphi^p_t(2^{i}x,2^{i}x)+\frac{n^{2p}}{(n^2-1)^p}(\varphi^p_t(0,2^{i}(n+1)x)+\varphi^p_t(0,2^{i}(n-3)x)\\&+10^p\varphi^p_t(0,2^{i}(n-1)x)+4^p\varphi^p_t(0,2^{i}nx)+4^p\varphi^p_t(0,2^{i}(n-2)x))\\&+\frac{(n^4+1)^p}{(n^2-1)^p}\varphi^p_t(0,2^{i}2x)+\frac{(2(3n^4-n^2+2))^p}{(n^2-1)^p}\varphi^p_t(0,2^{i}x)~]~
\textbf{\}}. \hspace{2.7cm}\end{align*}
\end{thm}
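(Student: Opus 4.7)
The plan is to mirror the proof of Theorem 3.4 almost verbatim, swapping the roles of the contraction $x \mapsto x/2$ and the dilation $x \mapsto 2x$. Since the statement differs from Theorem 3.4 only by interchanging $4^m\varphi_t(\cdot/2^m,\cdot/2^m)$ with $16^{-m}\varphi_t(2^m\cdot,2^m\cdot)$ and correspondingly the sequence defining $T$, the preliminary inequality
$$\|f(4x)-20f(2x)+64f(x)\|_Y \le M^{8}\psi_t(x),$$
where $\psi_t(x)$ is the same combination of $\varphi_t$-terms given in (3.49), is derived in exactly the same way as (3.29)/(3.48): substitute $y=x,2x,nx,(n\pm1)x,(n\pm2)x,3x$ in (3.44), combine the resulting inequalities with (3.7) applied to various $y$ to eliminate the odd parts, then assemble them through the intermediate identities (3.25) and (3.26). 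So I would quote that derivation from the previous theorem rather than repeat it.

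Next, I would set $h(x):=f(2x)-4f(x)$, which rewrites the displayed bound as
$$\|h(2x)-16h(x)\|_Y \le M^{8}\psi_t(x).$$
Now, instead of scaling $x\mapsto x/2^{m+1}$ as in Theorem 3.4, I replace $x$ by $2^{m}x$ and divide by $16^{m+1}$ to obtain
$$\Big\|\tfrac{1}{16^{m+1}}h(2^{m+1}x)-\tfrac{1}{16^{m}}h(2^{m}x)\Big\|_Y \le \tfrac{M^{8}}{16^{m+1}}\psi_t(2^{m}x).$$
Raising to the $p$-th power and using the $p$-Banach inequality telescopically gives, for $m\ge k\ge 0$,
$$\Big\|\tfrac{1}{16^{m+1}}h(2^{m+1}x)-\tfrac{1}{16^{k}}h(2^{k}x)\Big\|_Y^{p} \le M^{8p}\sum_{i=k}^{m}\tfrac{1}{16^{p(i+1)}}\psi_t^{p}(2^{i}x).$$
Lemma 3.1 applied to $\psi_t^{p}$ (just as in (3.33)/(3.53)) together with the summability hypothesis shows the right‑hand side is the tail of a convergent series, so the sequence $\{16^{-m}h(2^{m}x)\}$ is Cauchy; by completeness of $Y$ define $T(x):=\lim_{m\to\infty}16^{-m}h(2^{m}x)$, which is exactly the limit claimed. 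Setting $k=0$ and letting $m\to\infty$ yields the stability estimate for $\|f(2x)-4f(x)-T(x)\|_Y$.

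To check that $T$ is quartic I would argue as in Theorem 3.4: applying the limit to the functional inequality $(3.44)$ at the rescaled points $2^{m}x,2^{m}y$ and using the hypothesis $16^{-m}\varphi_t(2^{m}x,2^{m}y)\to 0$ gives $\triangle T(x,y)=0$, so $T$ satisfies (1.5); then Lemma 2.1 ensures that $x\mapsto T(2x)-4T(x)$ is quartic, and the relation $T(2x)=16T(x)$ (obtained from $\|T(2x)-16T(x)\|_Y\le M^{8}\lim 16^{-m}\psi_t(2^{m}x)=0$) forces $T$ itself to be quartic. For uniqueness, if $T'$ is another quartic solution with the same bound, then $T'(2^{m}x)=16^{m}T'(x)$, so applying the stability estimate at $2^{m}x$ and dividing by $16^{m}$ yields
$$\|T(x)-T'(x)\|_Y^{p}\le \tfrac{M^{8p}}{16^{p}}\lim_{m\to\infty}16^{-mp}\widetilde{\psi}_t(2^{m}x)=0,$$
the last equality following from the tail‑of‑convergent‑series argument analogous to (3.58).

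The only genuinely non-routine point is the bookkeeping in the very first step: verifying that the same combination $\psi_t(x)$ (with the coefficients $1,\,4,\,10,\,n^{2},\,(17n^{2}-8)$, etc.) drops out of the nine auxiliary inequalities and the two identities (3.25)/(3.26). Everything afterwards is a direct transcription of the Theorem 3.4 argument with $4^{m}\leftrightarrow 16^{-m}$ and $x/2^{m}\leftrightarrow 2^{m}x$, so I would simply say ``the remainder is analogous to the proof of Theorem 3.4'' rather than reprove it.
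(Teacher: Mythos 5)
Your proposal is correct and follows exactly the route the paper intends: its own proof of this statement is the one-line remark that it is ``similar to the proof of Theorem 3.6'' (the quartic contraction version, which you cite as Theorem 3.4 by a harmless numbering slip), and your transcription with $16^{m}\leftrightarrow 16^{-m}$ and $x/2^{m}\leftrightarrow 2^{m}x$ is precisely that adaptation, including the correct use of Lemma 2.1 via $T(2x)-4T(x)$ being quartic together with $T(2x)=16T(x)$.
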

\begin{proof}
The proof is similar to the proof of Theorem 3.6.
\end{proof}
\begin{cor}\label{t2}
Let $\theta, r, s$ be non-negative real numbers such that $r,s> 4$
or $0\leq r,s<4$. Suppose that a function $f:X\rightarrow Y$ with
$f(0)=0$ satisfies the inequality (3.40) for all $x, y \in X.$ Then
there exists a unique quartic function $T:X\rightarrow Y$ satisfying
$$\| f(2x)-4f(x)-T(x)\|_Y \leq \frac{M^{8} \theta}{n^2(n^2-1)}
\left\{%
\begin{array}{ll}
    \delta_{t}, & \hbox{r =s =0;} \\
    \alpha_{t}(x), & \hbox{r $>$ 0, s=0;} \\
    \beta_{t}(x), & \hbox{r=0, s $>$ 0;} \\
    (\alpha^p_{t}(x)+\beta^p_{t}(x))^{\frac{1}{p}}, & \hbox{r, s $>$ 0.} \\
\end{array}%
\right.\eqno\hspace{2.2cm}$$ for all $x \in X,$ where
\begin{align*}\delta_{t}&=\textbf{\{}~\frac{1}{(16^p-1)(n^2-1)^p}[(6n^2-2)^{p}(n^2-1)^p+(17n^2-8)^{p}(n^2-1)^p+(6n^4-2n^2+4)^p\\&+n^{2p}(2+10^p+2*4^p)+(n^4+1)^p+n^{2p}(n^2-1)^p+3*4^{p}(n^2-1)^p+10^{p}(n^2-1)^p\\&+3(n^2-1)^p]
~\textbf{\}}^\frac{1}{p},\hspace{5cm}\end{align*}
\begin{align*}\alpha_{t}(x)=\textbf{\{}~\frac{4^{p}(2+2^{rp})+10^{p}+(6n^2-2)^{p}+(17n^2-8)^{p}+2^{rp}+n^{2p}}{|16^p-2^{rp}|}
~\textbf{\}}^\frac{1}{p}\|x\|_X^r \hspace{5cm}\end{align*}and
\begin{align*}\beta_{t}(x)&=\textbf{\{}~\frac{1}{(n^2-1)^{p}|16^p-2^{sp}|}[2^{sp}(6n^2-2)^{p}(n^2-1)^p+(17n^2-8)^{p}(n^2-1)^p\\&+(6n^4-2n^2+4)^p+n^{2p}((n+1)^{sp}+(n-3)^{sp}+10^{p}(n-1)^{sp}\\&+10^{p}n^{sp}+4^{p}(n-2)^{sp})+2^{sp}(n^4+1)^p+3^{sp}n^{2p}(n^2-1)^p+4^{p}(n^2-1)^p\\&+(n+2)^{sp}(n^2-1)^p+(n-2)^{sp}(n^2-1)^p+4^{p}(n+1)^{sp}(n^2-1)^p\\&+4^{p}(n-1)^{sp}(n^2-1)^p+4^{p}n^{sp}(n^2-1)^p]
~\textbf{\}}^\frac{1}{p}\|x\|_X^s .\hspace{7.3cm}\end{align*}
\end{cor}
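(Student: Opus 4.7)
The plan is to obtain Corollary~3.9 as a direct specialization of Theorems~3.6 and~3.8 with an explicit choice of control function $\varphi_t$. In the four sub-cases of \textup{(3.40)} I set
\begin{equation*}
\varphi_t(x,y)=\begin{cases} \theta, & r=s=0,\\ \theta\|x\|_X^r, & r>0,\ s=0,\\ \theta\|y\|_X^s, & r=0,\ s>0,\\ \theta(\|x\|_X^r+\|y\|_X^s), & r,s>0,\end{cases}
\end{equation*}
so that \textup{(3.40)} becomes exactly the hypothesis \textup{(3.44)}. The choice between the two theorems is forced by the direction of the iteration: the summability condition of Theorem~3.6 holds iff the non-zero exponents satisfy $>4$, whereas that of Theorem~3.8 holds iff they satisfy $<4$. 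Accordingly, I apply Theorem~3.8 in the regime $0\le r,s<4$ (which absorbs the degenerate case $r=s=0$) and Theorem~3.6 in the regime $r,s>4$.

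Checking the two hypotheses of the chosen theorem is routine. For Theorem~3.8 under $0\le r,s<4$, the expression $16^{-m}\varphi_t(2^mx,2^my)$ is a finite sum of terms of the form $\theta\, 2^{m(r-4)}\|x\|_X^r$ and $\theta\, 2^{m(s-4)}\|y\|_X^s$, both tending to $0$; the $p$-series $\sum_i 16^{-pi}\varphi_t^p(2^ix,2^iy)$ is then dominated by a geometric series with ratio $2^{p(\max(r,s)-4)}<1$. The Theorem~3.6 regime is dual, with $2^i$ replaced by $2^{-i}$ and the ratio becoming $2^{p(4-\min(r,s))}<1$.

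The remaining work is to compute $\widetilde{\psi}_t(x)$ in closed form. After substituting the chosen $\varphi_t$ into the expression given in Theorem~3.6 (or its Theorem~3.8 analogue) and using Lemma~3.1 to split $\varphi_t^p$ into the $\|x\|^r$- and $\|y\|^s$-pieces whenever both are present, each evaluation at an argument of the form $\alpha x$ contributes a monomial $|\alpha|^{rp}\|x\|_X^{rp}$ or $|\alpha|^{sp}\|x\|_X^{sp}$ weighted by its coefficient inside $\widetilde{\psi}_t$. Summing the geometric series in $i$ then produces the denominators $|16^p-2^{rp}|$ and $|16^p-2^{sp}|$ that appear in $\alpha_t(x)$ and $\beta_t(x)$ respectively, while in the $r=s=0$ case the bare factor $\sum_{i\ge 0} 16^{-pi}=16^p/(16^p-1)$ yields the $(16^p-1)^{-1}$ in $\delta_t$. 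A final application of Lemma~3.1 to the sum $\alpha_t^p+\beta_t^p$ handles the combined case $r,s>0$.

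The main obstacle is not conceptual but purely combinatorial: one must bookkeep the roughly sixteen evaluations of $\varphi_t^p$ inside $\widetilde{\psi}_t(x)$ along with their prefactors $1/(n^2-1)^p$, $n^{2p}/(n^2-1)^p$, $(n^4+1)^p/(n^2-1)^p$, and $(2(3n^4-n^2+2))^p/(n^2-1)^p$, and then group the resulting monomials $|\alpha|^{rp}\|x\|_X^{rp}$ and $|\alpha|^{sp}\|x\|_X^{sp}$ (for $\alpha\in\{1,2,3,n,n\pm 1,n\pm 2,n-3\}$) into the explicit coefficients listed for $\delta_t$, $\alpha_t(x)$, and $\beta_t(x)$; the arithmetic is tedious but mechanical.
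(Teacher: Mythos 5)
Your proposal is correct and follows essentially the same route as the paper, whose entire proof of this corollary is the one-line instruction to put $\varphi_{t}(x,y):=\theta(\|x\|_X^r+\|y\|_X^s)$ into Theorem 3.6. You are in fact slightly more careful than the paper: you correctly observe that the convergence hypotheses force Theorem 3.6 only in the regime $r,s>4$ and that the dual Theorem 3.8 must be used when $0\le r,s<4$ (including $r=s=0$), a distinction the paper's stated proof glosses over.
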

\begin{proof}
In Theorem 3.6,  putting  $\varphi_{t}(x,
y):=\theta(\|x\|_X^r+\|y\|_X^s)$ for all $x, y \in X.$
\end{proof}

\begin{cor}\label{t2}
Let $\theta\geq0$ and $r, s>0$ be non-negative real numbers such
that $\lambda:=r+s\neq4$. Suppose that a function $f:X\rightarrow Y$
with $f(0)=0$  satisfies the inequality (3.41) for all $x, y \in X.$
Then there exists a unique quartic function
 $T:X\rightarrow Y$ satisfying
\begin{align*}\|f(2x)-4f(x)-T(x)\|_Y &\leq\frac{M^{8}\theta}{n^2(n^2-1)}
\textbf{\{}~\frac{1}{|16^p-2^{\lambda
p}|}[(n+2)^{sp}+(n-2)^{sp}+4^{p}(n+1)^{sp}\\&+4^{p}(n-1)^{sp}+10^{p}n^{sp}+2^{(r+s)p}+4^{p}2^{rp}+n^{2p}3^{sp}\\&+2^{sp}(6n^2-2)^{p}+(17n^2-8)^{p}]
~\textbf{\}}^\frac{1}{p}~\|x\|_X^\lambda\hspace{5cm}\end{align*}
for all $x \in X.$
\end{cor}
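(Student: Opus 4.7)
The plan is to specialize Theorem 3.6 (when $\lambda > 4$) and Theorem 3.8 (when $\lambda < 4$) to the particular control function $\varphi_t(x,y) := \theta \|x\|_X^r \|y\|_X^s$; the dichotomy is forced by which of the two convergence hypotheses (namely (3.42)--(3.43) or their Theorem 3.8 analogues) is satisfied. Substituting this $\varphi_t$ into (3.43) yields $\sum_{i \ge 1} 16^{pi}\, \theta^p \, 2^{-i\lambda p}\, \|x\|^{rp}\|y\|^{sp}$, a geometric series of ratio $2^{(4-\lambda)p}$, which converges exactly when $\lambda > 4$; the other regime is symmetric under Theorem 3.8.

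The bulk of the work is to evaluate $\widetilde{\psi}_t(x)$ explicitly for this choice of $\varphi_t$. The key simplification is that the assumption $r > 0$ forces $\varphi_t(0,cx) = 0$ identically, so every term of (3.47) involving $\varphi_t^p(0,\cdot)$ drops out. Each surviving term factors cleanly as $c^{rp} d^{sp} \theta^p\, 2^{-i\lambda p}\, \|x\|_X^{\lambda p}$ with $c, d$ drawn from $\{1, 2, 3, n, n\pm 1, n \pm 2\}$, so after pulling the common geometric factor $\sum_{i=1}^{\infty} 2^{(4-\lambda)pi} = 16^p/(2^{\lambda p} - 16^p)$ out of the summation what remains is precisely the bracketed coefficient
$$S := (n+2)^{sp} + (n-2)^{sp} + 4^p(n+1)^{sp} + 4^p(n-1)^{sp} + 10^p n^{sp} + 2^{(r+s)p} + 4^p 2^{rp} + n^{2p} 3^{sp} + 2^{sp}(6n^2-2)^p + (17n^2-8)^p$$
displayed in the statement of the corollary.

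Combining, $\widetilde{\psi}_t(x) = \dfrac{16^p \, \theta^p \, \|x\|_X^{\lambda p} \cdot S}{n^{2p}(n^2-1)^p \, |16^p - 2^{\lambda p}|}$, where the absolute value packages both sign conventions of $\lambda - 4$ (and so simultaneously covers the case $\lambda < 4$ handled by Theorem 3.8). Feeding this into the bound $\|f(2x) - 4f(x) - T(x)\|_Y \leq (M^8/16)[\widetilde{\psi}_t(x)]^{1/p}$ of Theorem 3.6, taking $p$-th roots and cancelling the $16^{1/p}$ from the geometric sum against the $1/16$ prefactor yields the asserted inequality. The main point of care, and the step most likely to go wrong under careless reading of (3.47), is verifying that the ten surviving coefficients match $S$ term-for-term after the $\varphi_t(0,\cdot)$ block has been purged; once that bookkeeping is confirmed, the rest of the argument is a direct substitution.
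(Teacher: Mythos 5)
Your proposal is correct and follows essentially the same route as the paper, whose entire proof is the one-line instruction to set $\varphi_t(x,y)=\theta\|x\|_X^r\|y\|_X^s$ in Theorem 3.6; your computation of $\widetilde{\psi}_t$ (the vanishing of the $\varphi_t(0,\cdot)$ block since $r>0$, the ten surviving coefficients, and the geometric factor $16^p/|16^p-2^{\lambda p}|$ cancelling against the $1/16$ prefactor) checks out term for term. You are in fact slightly more careful than the paper in noting that the case $\lambda<4$ requires the dual theorem with the $2^m x$ scaling (Theorem 3.7 in the paper's numbering, not 3.8), a point the published one-line proof leaves implicit.
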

\begin{proof}
In Theorem 3.6, putting $\varphi_{t}(x, y):=\theta\|x\|_X^r
\|y\|_X^s$ for all $x, y \in X.$
\end{proof}
\begin{thm}\label{t2}
Let  $\varphi:X\times X\rightarrow [0, \infty)$ be a function such
that
$$\lim_{m\rightarrow\infty}4^{m}
\varphi(\frac{x}{2^{m}},\frac{y}{2^{m}})
=0=\lim_{m\rightarrow\infty} \frac{1}{16^{m}}
\varphi(2^{m}x,2^{m}y)\eqno \hspace {4cm}(3.59)$$ for all $x,y\in X$
and
$$\sum^{\infty}_{i=1}4^{pi}
\varphi^p(\frac{x}{2^{i}},\frac{y}{2^{i}})<\infty \eqno \hspace
{5cm}$$and
$$\sum^{\infty}_{i=0}
\frac{1}{16^{pi}} \varphi^p(2^{i}x,2^{i}y)<\infty\eqno \hspace
{5cm}$$for all $x\in X$ and for all $y\in
\{x,2x,3x,nx,(n+1)x,(n-1)x,(n+2)x,(n-2)x,(n-3)x\}$. Suppose that a
function $f:X\rightarrow Y$ with $f(0)=0$  satisfies the inequality
$$\|\triangle f(x,y)\|_Y \leq\varphi(x,y),\eqno\hspace{6.5 cm} (3.60)$$
for all $x,y\in X.$ Then there exist a unique quadratic function
$Q:X \to Y$ and a unique quartic function $T:X \to Y$ such that
$$\|f(x)-Q(x)-T(x)\|_Y\leq \frac{M^{9}}{192}~( 4[\widetilde{\psi}_q(x)]^{\frac{1}{p}}
+[\widetilde{\psi}_t(x)]^{\frac{1}{p}} )\eqno\hspace{2 cm}(3.61)$$
for all $x \in X,$ where $\widetilde{\psi}_q(x)$ and
 $\widetilde{\psi}_t(x)$ have been defined in Theorems 3.2 and 3.7,
respectively, for all $x \in X.$
\end{thm}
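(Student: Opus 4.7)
The plan is to decouple the quadratic and quartic contributions by invoking Theorems 3.2 and 3.6 separately, and then to glue the resulting approximations via the algebraic identity
$$12 f(x)=\bigl(f(2x)-4f(x)\bigr)-\bigl(f(2x)-16f(x)\bigr),\qquad x\in X.$$
This identity reflects the decomposition $f=D(\cdot)+B(\cdot)$ from Theorem 2.2: since a quartic term is killed by $f(2x)-16f(x)$ and a quadratic term is killed by $f(2x)-4f(x)$, the two auxiliary functions $g(x):=f(2x)-16f(x)$ and $h(x):=f(2x)-4f(x)$ are approximately quadratic and approximately quartic, respectively, as established in Lemma 2.1.

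Concretely, I would set $\varphi_q:=\varphi$ and apply Theorem 3.2 to produce a unique quadratic function $g_0:X\to Y$ with
$$\|f(2x)-16f(x)-g_0(x)\|_Y\leq \tfrac{M^{8}}{4}[\widetilde\psi_q(x)]^{1/p},$$
using the hypotheses (3.59) and the first summability assumption to verify (3.1) and (3.2). Likewise, setting $\varphi_t:=\varphi$ and invoking Theorem 3.6 yields a unique quartic function $h_0:X\to Y$ with
$$\|f(2x)-4f(x)-h_0(x)\|_Y\leq \tfrac{M^{8}}{16}[\widetilde\psi_t(x)]^{1/p}.$$
Then I would define $Q(x):=-\frac{1}{12}g_0(x)$ and $T(x):=\frac{1}{12}h_0(x)$; $Q$ is quadratic and $T$ is quartic because multiplying a quadratic (respectively quartic) function by a scalar preserves its type.

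To derive the bound (3.61), I would combine the identity above with the two approximations. Writing
$$12 f(x)-12 Q(x)-12 T(x)=\bigl(f(2x)-4f(x)-h_0(x)\bigr)-\bigl(f(2x)-16f(x)-g_0(x)\bigr),$$
and applying the quasi-norm inequality with modulus of concavity $M$, one obtains
$$12\,\|f(x)-Q(x)-T(x)\|_Y\leq M\bigl(\tfrac{M^{8}}{16}[\widetilde\psi_t(x)]^{1/p}+\tfrac{M^{8}}{4}[\widetilde\psi_q(x)]^{1/p}\bigr),$$
which rearranges to the desired factor $\frac{M^{9}}{192}\bigl(4[\widetilde\psi_q(x)]^{1/p}+[\widetilde\psi_t(x)]^{1/p}\bigr)$.

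The step that needs slightly more care is uniqueness, since two pairs $(Q,T)$ and $(Q',T')$ satisfying (3.61) need not separately be close in an obvious way. I would argue as follows. Set $U:=Q-Q'$ (quadratic) and $V:=T-T'$ (quartic), so that $U(2^{-m}x)=4^{-m}U(x)$ and $V(2^{-m}x)=16^{-m}V(x)$. By the triangle inequality, $\|U(x/2^m)+V(x/2^m)\|_Y$ is controlled by twice the right-hand side of (3.61) evaluated at $x/2^m$. Multiplying by $4^m$, the quartic part contributes $V(x)/4^m\to 0$, while the quadratic part is exactly $U(x)$; by the summability condition (3.39)-type estimate for $\widetilde\psi_q$ and the observation that $4^m 16^{-m}\widetilde\psi_t(x/2^m)^{1/p}\to 0$, the right-hand side also tends to $0$. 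This forces $U\equiv 0$, i.e.\ $Q=Q'$. Substituting back and now scaling by $2^m$ instead of $2^{-m}$, the same kind of limit (using the second summability hypothesis on $\widetilde\psi_t$) gives $V\equiv 0$, i.e.\ $T=T'$. The main technical obstacle is precisely this two-scale uniqueness argument, because the quadratic and quartic parts must be separated using both the $2^{-m}$ limit (to kill $V$ relative to $U$) and the $2^m$ limit (to kill $V$ absolutely once $U$ is gone); once that is done, the rest of the proof is essentially a direct combination of the earlier theorems.
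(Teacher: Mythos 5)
Your overall strategy coincides with the paper's: obtain a quadratic approximant for $g(x)=f(2x)-16f(x)$ and a quartic approximant for $h(x)=f(2x)-4f(x)$ from the one-sided theorems, glue them through $12f=h-g$ with $Q=-\frac{1}{12}g_0$ and $T=\frac{1}{12}h_0$ (your computation of the constant $\frac{M^{9}}{192}\bigl(4[\widetilde{\psi}_q(x)]^{1/p}+[\widetilde{\psi}_t(x)]^{1/p}\bigr)$ is correct), and then separate the two homogeneities for uniqueness. There is, however, one substantive slip: for the quartic part you invoke Theorem 3.6, whose hypotheses are $\lim_{m}16^{m}\varphi(x/2^{m},y/2^{m})=0$ and $\sum_{i}16^{pi}\varphi^{p}(x/2^{i},y/2^{i})<\infty$, and neither of these is assumed here. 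The present theorem only controls $\varphi$ at large arguments, via $\lim_{m}16^{-m}\varphi(2^{m}x,2^{m}y)=0$ and $\sum_{i}16^{-pi}\varphi^{p}(2^{i}x,2^{i}y)<\infty$, so the correct reference is Theorem 3.7 --- which is also where the $\widetilde{\psi}_t$ appearing in (3.61) is defined; the $\widetilde{\psi}_t$ of Theorem 3.6 is a different series. The mismatch is not cosmetic: the point of pairing the $x/2^{m}$ scale for the quadratic part with the $2^{m}x$ scale for the quartic part is to cover cases such as $\varphi(x,y)=\theta\|x\|_X^{r}$ with $2<r<4$, where Theorem 3.6's summability condition fails while Theorem 3.7's holds.

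On uniqueness, your two-scale scheme is sound and in fact more explicit than the paper's own (which asserts $\lim_{m}\|4^{m}Q_3(x/2^{m})+16^{-m}T_3(2^{m}x)\|_Y=0$ with little justification), but the limit you quote, $4^{m}16^{-m}[\widetilde{\psi}_t(x/2^{m})]^{1/p}\to 0$, is not the quantity you need: after multiplying the estimate at $x/2^{m}$ by $4^{m}$, the right-hand side contains $4^{m}[\widetilde{\psi}_t(x/2^{m})]^{1/p}$ with no factor $16^{-m}$ (that factor belongs to $V(x/2^{m})=16^{-m}V(x)$ on the left). The needed claim $4^{mp}\widetilde{\psi}_t(x/2^{m})\to 0$ is true under the stated hypotheses but requires an argument: writing $4^{mp}\widetilde{\psi}_t(x/2^{m})=4^{-mp}\sum_{j\geq -m}16^{-pj}\varphi^{p}(2^{j}x,\cdot)$, the part with $j\geq 0$ is $4^{-mp}O(1)$, while the part with $j=-k$, $1\leq k\leq m$, equals $4^{-mp}\sum_{k=1}^{m}4^{pk}\bigl(4^{pk}\varphi^{p}(x/2^{k},\cdot)\bigr)=4^{-mp}\,o(4^{mp})$ because $4^{pk}\varphi^{p}(x/2^{k},\cdot)\to 0$ as the general term of a convergent series. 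A symmetric estimate, $16^{-mp}\widetilde{\psi}_q(2^{m}x)\to 0$, must be supplied in your second stage where you scale up by $2^{m}$ to conclude $V\equiv 0$. With Theorem 3.6 replaced by Theorem 3.7 and these two limits justified, your argument closes and agrees with the paper's.
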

\begin{proof}
By Theorems 3.2 and 3.7, there exist a quadratic function $Q_0:X \to
Y$ and a quartic function $T_0:X \to Y$ such that
$$~\|f(2x)-16f(x)-Q_0(x)\|_Y
\leq\frac{M^{8}}{4}[\widetilde{\psi}_q(x)]^{\frac{1}{p}},
 \hspace{.8cm}\|f(2x)-4f(x)-T_0(x)\|_Y\leq\frac{M^{8}}{16}[\widetilde{\psi}_t(x)]^{\frac{1}{p}}$$
for all $x \in X.$ Therefore, it follows from the last
inequalities that
$$\|f(x)+\frac{1}{12}Q_0(x)-\frac{1}{12}T_0(x)\|_Y \leq
\frac{M^{9}}{192}~( 4[\widetilde{\psi}_q(x)]^{\frac{1}{p}}
+[\widetilde{\psi}_t(x)]^{\frac{1}{p}} )\eqno\hspace{4.4 cm}$$ for
all $x \in X.$ So we obtain $(3.61)$ by letting
$Q(x)=-\frac{1}{12}Q_0(x)$ and $T(x)=\frac{1}{12}T_0(x)$ for all
$x \in X.$

To prove the uniqueness property of $Q$ and $T,$ we first show the
uniqueness property for $Q_0$ and $T_0$ and then we conclude the
uniqueness property of $Q$ and $T.$ Let $Q_1,T_1:X \to Y$ be
another quadratic and quartic functions satisfying (3.61) and let
$Q_2=\frac{1}{12}Q_0$, $T_2=\frac{1}{12}T_0$, $Q_3=Q_{2}-Q_1$ and
$T_3=T_{2}-T_1.$ So
\begin{align*}
\|Q_3(x)-T_3(x)\|_Y &\leq M\{\|f(x)-Q_2(x)-T_2(x)\|_Y
+\|f(x)-Q_1(x)-T_1(x)\|_Y \}\\&\leq\frac{M^{10}}{96}~(
4[\widetilde{\psi}_q(x)]^{\frac{1}{p}}
+[\widetilde{\psi}_t(x)]^{\frac{1}{p}} ) \hspace{5.1cm} (3.62)
\end{align*}
for all $x \in x.$ Since
$$\lim_{m \to \infty} 4^{mp} \widetilde{\psi}_q(\frac{x}{2^m})=
\lim_{m \to
\infty}\frac{1}{16^{mp}}\widetilde{\psi}_t(2^{m}x)=0\eqno\hspace{5cm}$$
for all $x \in X,$ then (3.62) implies that $\lim_{m \to
\infty}\|4^{m}Q_3(\frac{x}{2^m})+\frac{1}{16^m}T_3(2^{m}x)\|_Y=0$
for all $x \in X.$ Thus, $T_3=Q_3.$ But $T_3$ is only a quartic
function and $Q_3$ is only a quadratic function. Therefore, we
should have $T_3=Q_3=0$ and this complete the uniqueness property
of $Q$ and $T.$ The other results proved similarly.
\end{proof}

\begin{cor}\label{t2}
Let $\theta, r, s$ be non-negative real numbers such that $r,s> 4$
or $2< r,s <4$ or $0\leq r,s<2$. Suppose that a function
$f:X\rightarrow Y$ satisfies the inequality (3.40)
 for all $x, y \in X.$ Then
there exist a unique quadratic function $Q:X\rightarrow Y$ and a
unique quartic function $T:X\rightarrow Y$ such that
$$\| f(x)-Q(x)-T(x)\|_Y \leq \frac{M^{9} \theta}{12n^2(n^2-1)}
\left\{%
\begin{array}{ll}
    \delta_{q}+\delta_{t}, & \hbox{r =s =0;} \\
    \alpha_{q}(x)+\alpha_{t}(x), & \hbox{r $>$ 0, s=0;} \\
    \beta_{q}(x)+\beta_{t}(x), & \hbox{r=0, s $>$ 0;} \\
    (\alpha^p_{q}(x)+\beta^p_{q}(x))^{\frac{1}{p}}+(\alpha^p_{t}(x)+\beta^p_{t}(x))^{\frac{1}{p}}, & \hbox{r, s $>$ 0.} \\
\end{array}%
\right.\eqno\hspace{2cm}$$ for all $x \in X,$ where $\delta_{q},
\delta_{t}, \alpha_{q}(x), \alpha_{t}(x), \beta_{q}(x)$  and
 $\beta_{t}(x)$ are defined as in Corollaries 3.4 and 3.8.
\end{cor}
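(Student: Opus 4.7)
The plan is to reduce Corollary 3.11 to Theorem 3.10 (and its three unstated sibling variants covering the other direction combinations) by specializing to the control function $\varphi(x,y) := \theta(\|x\|_X^r + \|y\|_X^s)$. The three parameter regimes $r,s>4$, $2<r,s<4$, and $0\le r,s<2$ correspond to the three ways of simultaneously forcing the quadratic-part limit (Theorem 3.2 or Theorem 3.3) and the quartic-part limit (Theorem 3.6 or Theorem 3.7) to converge, since $4^m\varphi(x/2^m,\cdot)\to 0$ demands $r,s>2$, the dual $4^{-m}\varphi(2^mx,\cdot)\to 0$ demands $r,s<2$, and the analogous quartic conditions demand $r,s>4$ or $r,s<4$ respectively. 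In the intermediate regime $2<r,s<4$ one applies Theorem 3.10 verbatim; the other two regimes use the obvious variants obtained by swapping $\tfrac{x}{2^m}\leftrightarrow 2^mx$ (promised at the end of the proof of Theorem 3.10 by ``The other results proved similarly'').

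The first step is therefore to check, in each of the three cases, that the summability hypotheses $\sum 4^{pi}\varphi^p(x/2^i,\cdot)<\infty$ and $\sum 16^{-pi}\varphi^p(2^ix,\cdot)<\infty$ (or their swapped counterparts) hold for $y\in\{x,2x,3x,nx,(n\pm1)x,(n\pm2)x,(n-3)x\}$. These reduce to convergent geometric series in $2^{(2-r)p}$, $2^{(r-4)p}$, etc., exactly under the stated inequalities. Then one invokes the relevant variant of Theorem 3.10 to produce a unique quadratic $Q:X\to Y$ and a unique quartic $T:X\to Y$ with
\[
\|f(x)-Q(x)-T(x)\|_Y \;\le\; \tfrac{M^{9}}{192}\bigl(4[\widetilde{\psi}_q(x)]^{1/p}+[\widetilde{\psi}_t(x)]^{1/p}\bigr).
\]

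The second step is purely computational: with $\varphi_q=\varphi_t=\varphi=\theta(\|\cdot\|^r+\|\cdot\|^s)$, evaluating $\widetilde{\psi}_q(x)$ and $\widetilde{\psi}_t(x)$ from formulas (3.6) and (3.47) reduces every summand to a geometric series with ratio $2^{(2-r)p}$, $2^{(2-s)p}$, $2^{(r-4)p}$, or $2^{(s-4)p}$. Collecting the results is exactly what was already done in Corollaries 3.4 and 3.8: the quadratic part yields the coefficient $\frac{M^8\theta}{n^2(n^2-1)}$ times $\delta_q$, $\alpha_q(x)$, $\beta_q(x)$, or $(\alpha_q^p+\beta_q^p)^{1/p}$ according to whether $r=s=0$, $r>0=s$, $r=0<s$, or $r,s>0$, and similarly the quartic part contributes $\frac{M^8\theta}{n^2(n^2-1)}$ times the corresponding $\delta_t,\alpha_t,\beta_t$. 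Summing the two contributions through the factor $\tfrac{M^9}{192}\cdot 4=\tfrac{M^9}{48}$ for the quadratic and $\tfrac{M^9}{192}$ for the quartic, then bounding $\tfrac{1}{48},\tfrac{1}{192}\le \tfrac{1}{12}$ inside a common constant, produces precisely the stated estimate $\tfrac{M^9\theta}{12n^2(n^2-1)}$ times the bracketed expression.

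Uniqueness of $Q$ and $T$ is inherited verbatim from Theorem 3.10: a second pair $(Q',T')$ satisfying the same bound yields, in each of the three regimes, both $\lim 4^{mp}\widetilde{\psi}_q(x/2^m)=0$ (or $\lim 4^{-mp}\widetilde{\psi}_q(2^mx)=0$) and the analogous quartic decay, which forces the homogeneity-incompatible equality $Q-Q'=T'-T$ to be identically zero. The only real obstacle I anticipate is bookkeeping: correctly tracking which of the nine $y$-substitutions contributes each power of $2^{rp}$ or $2^{sp}$ so that the explicit constants $\delta_q,\delta_t,\alpha_q,\alpha_t,\beta_q,\beta_t$ from Corollaries 3.4 and 3.8 appear as stated, without a stray factor of $2$, $4$, or $n^2$ — but no genuinely new analytic argument is required beyond what Theorem 3.10 already supplies.
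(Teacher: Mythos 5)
Your proposal is correct and matches the paper's intended argument: the paper states this corollary without proof, and the evident route --- specializing $\varphi(x,y)=\theta(\|x\|_X^r+\|y\|_X^s)$ (and its degenerate cases) in Theorem 3.10 and its two same-direction variants according to the regime of $r,s$, then reusing the constants already computed in Corollaries 3.4 and 3.8 --- is exactly what you describe. One cosmetic remark: no lossy ``bounding of $\tfrac1{48},\tfrac1{192}$ by $\tfrac1{12}$'' is needed, since $[\widetilde{\psi}_q(x)]^{1/p}\leq\frac{4\theta}{n^2(n^2-1)}\delta_q$ and $[\widetilde{\psi}_t(x)]^{1/p}\leq\frac{16\theta}{n^2(n^2-1)}\delta_t$ make $\frac{M^{9}}{48}\cdot\frac{4\theta}{n^2(n^2-1)}$ and $\frac{M^{9}}{192}\cdot\frac{16\theta}{n^2(n^2-1)}$ both equal to $\frac{M^{9}\theta}{12n^2(n^2-1)}$ exactly.
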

\begin{cor}\label{t2}
Let $\theta\geq0$ and $r, s>0$ be non-negative real numbers such
that $\lambda:=r+s \in (0,2)\cup(2,4)\cup(4,\infty)$. Suppose that a
function $f:X\rightarrow Y$ satisfies the inequality (3.41) for all
$x, y \in X.$ Then there exist a unique quadratic function
$Q:X\rightarrow Y$ and a unique quartic function $T:X\rightarrow Y$
such that
\begin{align*}\|f(x)-Q(x)-T(x)\|_Y &\leq\frac{M^{9}\theta}{12n^2(n^2-1)}
\textbf{\{}~\frac{1}{|4^p-2^{\lambda
p}|}[(n+2)^{sp}+(n-2)^{sp}+4^{p}(n+1)^{sp}\\&+4^{p}(n-1)^{sp}+10^{p}n^{sp}+2^{(r+s)p}+4^{p}2^{rp}+n^{2p}3^{sp}\\&+2^{sp}(6n^2-2)^{p}+(17n^2-8)^{p}]
~\textbf{\}}^\frac{1}{p}~\|x\|_X^\lambda \hspace{5.1cm}
\end{align*}for all $x \in X.$
\end{cor}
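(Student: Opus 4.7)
The plan is to reduce Corollary 3.12 to the machinery already developed in Theorems 3.2, 3.3, 3.6, 3.7, and 3.11, by specializing the control function to $\varphi(x,y) := \theta \|x\|_X^r \|y\|_X^s$ and handling the three sub-intervals for $\lambda := r+s$ separately. With this choice one has $4^m \varphi(x/2^m, y/2^m) = \theta\, 2^{m(2-\lambda)} \|x\|_X^r \|y\|_X^s$, $(1/4^m)\varphi(2^m x, 2^m y) = \theta\, 2^{m(\lambda - 2)}\|x\|_X^r \|y\|_X^s$, and analogously $16^m \varphi(x/2^m,y/2^m)$ and $(1/16^m)\varphi(2^m x,2^m y)$ carry the factors $2^{m(4-\lambda)}$ and $2^{m(\lambda-4)}$ respectively. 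Hence the hypotheses of Theorem 3.2 hold exactly when $\lambda > 2$, those of Theorem 3.3 when $\lambda < 2$, those of Theorem 3.6 when $\lambda > 4$, and those of Theorem 3.7 when $\lambda < 4$. On each of the sub-intervals $(0,2)$, $(2,4)$, $(4,\infty)$, precisely one quadratic theorem and one quartic theorem applies, yielding a unique quadratic $Q_0$ with $f(2x)-16f(x)\approx Q_0(x)$ and a unique quartic $T_0$ with $f(2x)-4f(x)\approx T_0(x)$.

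Next I would evaluate $\widetilde\psi_q$ and $\widetilde\psi_t$ at this specific $\varphi$. Because every argument appearing inside $\widetilde\psi_q$ (or $\widetilde\psi_t$) is a scalar multiple of $x$, each term $\varphi^p$ factors as a numerical coefficient (a polynomial in $n^{sp}, (n\pm 1)^{sp}, (n\pm 2)^{sp}, (n-3)^{sp}, 2^{rp}, 2^{sp}, 3^{sp}$, etc.) times $\|x\|_X^{\lambda p}$. The summations over $i$ then collapse to geometric series in the ratio $2^{(2-\lambda)p}/4^{p}$ (or its reciprocal for Theorem 3.3), producing the denominator $|4^p - 2^{\lambda p}|$ for the quadratic part, and analogously $|16^p - 2^{\lambda p}|$ for the quartic part; the corollary's formulation absorbs these into a single displayed bound. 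The numerator is exactly the bracket listed in the corollary, obtained by gathering the coefficients of every $\varphi^p$ entry appearing in the defining expressions of $\widetilde\psi_q$ and $\widetilde\psi_t$.

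Third, I would combine the two approximations in the manner of Theorem 3.11. Setting $Q(x) := -\tfrac{1}{12} Q_0(x)$ and $T(x) := \tfrac{1}{12} T_0(x)$, the algebraic identity $f(x) = \tfrac{1}{12}[f(2x) - 4f(x)] - \tfrac{1}{12}[f(2x) - 16 f(x)]$ combined with axiom (3) of the quasi-norm on $Y$ gives
\[
\|f(x)-Q(x)-T(x)\|_Y \leq \tfrac{M}{12}\bigl(\|f(2x)-16f(x)-Q_0(x)\|_Y + \|f(2x)-4f(x)-T_0(x)\|_Y\bigr),
\]
and substituting the bounds (3.5) and (3.46) (or their Theorem 3.3 / Theorem 3.7 analogues) yields the stated inequality after collecting the common factor $\frac{M^{9}\theta}{12 n^2(n^2-1)}$.

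For uniqueness I would reproduce the argument at the end of the proof of Theorem 3.11: if $(Q_1, T_1)$ is another admissible pair, then the difference $Q_3 - T_3 := (Q - Q_1) - (T - T_1)$ is majorized by a constant multiple of $[\widetilde\psi_q(x)]^{1/p} + [\widetilde\psi_t(x)]^{1/p}$, and the homogeneity identities $Q_3(x/2^m) = 4^{-m} Q_3(x)$, $T_3(2^m x) = 16^{m} T_3(x)$ let one drive both $4^{mp}\widetilde\psi_q(x/2^m)$ and $16^{-mp}\widetilde\psi_t(2^m x)$ to zero as $m\to\infty$ in each of the three ranges for $\lambda$; since a quartic function coinciding with a quadratic one must vanish, $Q_3 = T_3 = 0$. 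The only real obstacle here is bookkeeping: verifying that in each of the three sub-intervals both one-sided limits used in the uniqueness argument (the $4^{mp}$ side from the quadratic theorem and the $16^{-mp}$ side from the quartic theorem) actually vanish, which is immediate from the placement of $\lambda$ relative to the thresholds $2$ and $4$.
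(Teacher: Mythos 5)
Your overall strategy is the one the paper intends: Corollary 3.12 carries no proof of its own, and, in parallel with Corollaries 3.5 and 3.9, it is meant to follow by putting $\varphi(x,y)=\theta\|x\|_X^r\|y\|_X^s$ into Theorem 3.10 and its two unstated variants for $\lambda<2$ and $\lambda>4$. Your case split over $(0,2)\cup(2,4)\cup(4,\infty)$, identifying which of Theorems 3.2/3.3 and which of Theorems 3.6/3.7 applies on each piece, is precisely the bookkeeping the paper suppresses with ``the other results proved similarly,'' and your decomposition $f(x)=\frac{1}{12}[f(2x)-4f(x)]-\frac{1}{12}[f(2x)-16f(x)]$ together with the uniqueness argument reproduces the proof of Theorem 3.10.

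The one step that does not go through as written is your claim that the two geometric-series denominators can be ``absorbed into a single displayed bound.'' Your computation correctly yields an estimate of the form $\frac{M^{9}\theta}{12n^{2}(n^{2}-1)}\bigl(C_{q}\,|4^{p}-2^{\lambda p}|^{-1/p}+C_{t}\,|16^{p}-2^{\lambda p}|^{-1/p}\bigr)\|x\|_X^{\lambda}$, with the quartic contribution carrying the denominator $|16^{p}-2^{\lambda p}|$. The corollary's displayed right-hand side contains only $|4^{p}-2^{\lambda p}|^{-1/p}$ and therefore stays bounded as $\lambda\to4$, whereas the quartic part of your (correct) estimate blows up there; no manipulation converts your bound into the printed one. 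This is most plausibly a defect of the statement (the quartic denominator appears to have been dropped in copying from Corollary 3.5, since Corollary 3.9 does carry $|16^{p}-2^{\lambda p}|$), not of your argument, but it is a genuine gap in the proposal viewed as a proof of the inequality \emph{as stated}: you should either prove the corrected two-denominator bound or note explicitly that the printed constant cannot be obtained. A minor further point: your parenthetical describing the uniqueness limits (the $4^{mp}$ side for the quadratic part and the $16^{-mp}$ side for the quartic part) is literally valid only on $(2,4)$; on $(0,2)$ the quadratic limit must be taken along $2^{m}x$, and on $(4,\infty)$ the quartic limit along $x/2^{m}$, as you implicitly acknowledge but should state.
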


{\small

%----------------------------------------------------------------------%

}
\end{document}